\newtheorem{thm}{Theorem}[section]
\newtheorem{cor}[thm]{Corollary}
\newtheorem{prop}[thm]{Proposition}
\newtheorem{tthm}{Theorem}[subsection]
\newtheorem{ccor}[tthm]{Corollary}
\newtheorem{llem}[tthm]{Lemma}
\theoremstyle{definition}
\newtheorem{defin}[thm]{Definition}
\newtheorem{rem}[thm]{Remark}
\newtheorem{prob}[thm]{Problem}
\newtheorem{exam}[thm]{Example}
\def\theequation{\thesection.\arabic{equation}}
\begin{document}

%
%

\newcommand{\BC}{\mathbb{C}}
\newcommand{\BR}{\mathbb{R}}
\newcommand{\BN}{\mathbb{N}}
\newcommand{\BZ}{\mathbb{Z}}

%
%

\title{On a class of singular nonlinear first order   
           partial differential equations}

\author{{} \\
Hidetoshi \textsc{TAHARA}\footnote{
        Sophia University, Tokyo 
        102-8554, Japan.
        E-mail: \texttt{h-tahara@sophia.ac.jp}}}
\date{}

\maketitle

\begin{abstract}
   In this paper, we consider a class of singular nonlinear first 
order partial differential equations 
$t(\partial u/\partial t)=F(t,x,u, \partial u/\partial x)$ with
$(t,x) \in \BR \times \BC$ under the assumption that $F(t,x,z_1,z_2)$
is a function which is continuous in $t$ and holomorphic in the other
variables. Under suitable conditions, we determine all the solutions
of this equation in a neighborhood of the origin.
\end{abstract}

{\it Key words and phrases}: nonlinear partial differential 
     equation, first order equation, Briot-Bouquet type.

{\it 2020 Mathematics Subject Classification Numbers}: 
      Primary 35F20; Secondary 35A01.

\renewcommand{\thefootnote}{\fnsymbol{footnote}}

%
%

\section{Introduction}\label{section1}
%

   The ordinary differential equation
\begin{equation}\label{1.1}
      t \,\frac{du}{dt} 
              = f(t,u) \quad \mbox{under $f(0,0)=0$}
\end{equation}
is called the Briot-Bouquet's ordinary differential equation.
This was first studied by Briot-Bouquet \cite{bb}. In the analytic
case, we have many references: Hille \cite{hille}, 
Hukuhara-Kimura-Matuda \cite{HKM}, G\'erard \cite{gerard}, 
Iwasaki-Kimura-Shimomura-Yoshida \cite{gauss}, etc. 
In the non-analytic case, we have some references:
Wintner \cite{wintner}, Sansone-Conti \cite{sansone}.
\par
   In this paper, as a generalization of (\ref{1.1}) to partial 
differential equations, we will consider a class of singular nonlinear 
first order partial differential equations of the form
\begin{equation}\label{1.2}
   t \,\frac{\partial u}{\partial t}
   = F \Bigl( t,x,u, \frac{\partial u}{\partial x} \Bigr),
\end{equation}
and determine all the solutions (including solutions with singularity 
at $t=0$) of the equation.
\par
   Let $t \in \BR$, $x \in \BC$, $z_1 \in \BC$ and $z_2 \in \BC$ be 
the variables.  For $r>0$ we write $D_r=\{y \in \BC \,;\, |y|<r \}$,
where $y$ represents $x$, $z_1$ or $z_2$. Let $T_0>0$, $R_0>0$, 
$\rho_0>0$, and set
$\Omega=\{(t,x,z_1,z_2) \in [0,T_0] \times D_{R_0} \times D_{\rho_0} 
\times D_{\rho_0} \}$.
\par
   Let $F(t,x,z_1,z_2)$ be a function on $\Omega$, and let us consider
the equation (\ref{1.2}) under the following assumptions:
\begin{description}
\item[\quad \,\,${\rm A}_1)$] $F(t,x,z_1,z_2)$ is a continuous function 
   on $\Omega$ which is holomorphic in \hspace*{4mm}the variables
   $(x,z_1,z_2) \in D_{R_0} \times D_{\rho_0} \times D_{\rho_0}$ for 
   any fixed $t$.
\item[\quad \,\,${\rm A}_2)$] $F(0,x,0,0)=0$ on $D_{R_0}$.
\item[\quad \,\,${\rm A}_3)$] $\dfrac{\partial F}{\partial z_2}
               (0,x,0,0)=0$ on $D_{R_0}$.
\end{description}
\par
   In the case where $F(t,x,z_1,z_2)$ is a holomorphic function in 
a neighborhood of $\BC_t \times \BC_x \times \BC_{z_1} \times \BC_{z_2}$,
this equation is called a Briot-Bouquet type partial differential
equation in $t$, and it was investigated in details by G\'erard-Tahara
\cite{gt1,book} and Yamazawa \cite{yamazawa}. See also Li \cite{Li} and
Yamazawa \cite{qyamazawa}.
\par
   We set
\[
         \lambda(t,x)= \frac{\partial F}{\partial z_1}(t,x,0,0).
\]
In the case where $F(t,x,z_1,z_2)$ is only continuous in $t$, the 
existence and the uniqueness of the solution were establised 
under the condition ${\rm Re} \lambda(0,0)<0$ by Lope-Roque-Tahara 
\cite{LRT} and Tahara \cite{unique}. This concludes that in the
case ${\rm Re} \lambda(0,0)<0$ the set of all the solutions
of (\ref{1.2}) consists of one solution.
\par
   But, in the case ${\rm Re} \lambda(0,0)>0$ we have no results
up to now. Thus, in this paper we will consider the following problem:

\begin{prob}\label{Problem1.1}
    Determine all the solutions of (\ref{1.2}) under
${\rm A}_1) \sim {\rm A}_3)$ and
\begin{equation}\label{1.3}
      {\rm Re} \lambda(0,0)>0.
\end{equation}
\end{prob}

   It should be noted that the $t$-continuous version of 
Cauchy-Kowalevsky theorem was first proved by Nagumo \cite{nagumo}, 
and it was re-proved by Treves \cite{treves}, Nirenberg \cite{nirenberg} 
in a different way. Since then, to extend a holomorphic version 
to a $t$-continuous version has been one of problems in the theory of
partial differential equations.

%
%

\section{Main results}\label{section2}
%

   By expanding $F(t,x,z_1,z_2)$ into Taylor series in $(z_1,z_2)$
we can express (\ref{1.2}) in the form
\begin{equation}\label{2.1}
   t \,\frac{\partial u}{\partial t}
   = a(t,x)+ \lambda(t,x)u+ b(t,x) \frac{\partial u}{\partial x}
        + R \Bigl( t,x,u, \frac{\partial u}{\partial x} \Bigr)
\end{equation}
where $R(t,x,z_1,z_2)$ is a function of the form
\[
    R(t,x,z_1,z_2)= \sum_{j+\alpha \geq 2}a_{j,\alpha}(t,x)
         {z_1}^j{z_2}^{\alpha}.
\]
\par
   Let $\mu(t) \in C^0((0,T_0])$ be a weight function
on $(0,T_0]$: this means that $\mu(t)$ is an increasing, continuous 
and positive value function on $(0,T_0]$ satisfying
\[
     \int_0^{T_0} \frac{\mu(\tau)}{\tau} d\tau < \infty.
\]
By this condition, we have $\mu(t) \longrightarrow 0$ 
(as $t \longrightarrow +0$).
\par
   In addition to ${\rm A}_1) \sim {\rm A}_3)$, we suppose $0<T_0<1$ 
and the following conditions:
\begin{align*}
   &\mbox{c1) \enskip $|a(t,x)| \leq A t^{\mu}$ on 
      $(0,T_0] \times D_{R_0}$ for some $A>0$ and $\mu>0$}. \\
   &\mbox{c2) \enskip 
           $|\lambda(t,x)-\lambda(0,x)| \leq \Lambda \mu(t)$ 
          on $(0,T_0] \times D_{R_0}$ for some $\Lambda>0$}. \\
   &\mbox{c3) \enskip $|b(t,x)| \leq \dfrac{B \mu(t)}{|\log t|}$ 
          on $(0,T_0] \times D_{R_0}$ for some $B>0$}.
\end{align*}
\par
   For simplicity, for $W=[0,T] \times D_R$ or 
$W=(0,T] \times D_R$ we denote by $X_0(W)$ the set of all functions
$f(t,x) \in C^0(W)$ which are holomorphic in $x$ for any fixed $t$.
Then, we have 
$a(t,x), \lambda(t,x), b(t,x) \in X_0([0,T_0] \times D_{R_0})$, 
and also $a_{j,\alpha}(t,x) \in X_0([0,T_0] \times D_{R_0})$.
\par
   Similarly, we denote by $X_1(W)$ the set of all functions
$f(t,x) \in C^1(W \cap \{0<t<T\})$ satisfying $f(t,x) \in X_0(W)$
and $(t \partial f/\partial t)(t,x) \in X_0(W)$.
\par
   Let us define classes of functions in which we consider
our equation (\ref{2.1}).

\begin{defin}\label{Definition2.1}
    (1) For $d>0$, $T>0$ and $R>0$ we denote by 
$X_1^d([0,T] \times D_R)$ the set of all $u(t,x) \in 
X_1([0,T] \times D_R)$ satusfying
\[
    \max\{|u(t,x)|, |(\partial u/\partial x)(t,x)|\} \leq Ct^d
        \quad \mbox{on $[0,T] \times D_R$}
\]
for some $C>0$. 
\par
   (2)  We set
\[
     X_1^d = \varinjlim_{(T,R) \to (0,0)}
           X_1^d([0,T] \times D_R)
\]
(where $\varinjlim$ denotes the inductive limit).
In other words, $X_1^d$ is the set of all germe of functions 
$u(t,x)$ belonging to $X_1^d([0,T] \times D_R)$ for some $T>0$ and $R>0$.
By the definition, if two functions 
$u_i(t,x) \in X_1^d([0,T_i] \times D_{R_i})$ ($i=1,2$) satisfy
$u_1(t,x)=u_2(t,x)$ on $[0,\epsilon] \times D_{\delta}$ for 
some $\epsilon >0$ and $\delta>0$, we think that $u_1=u_2$ in $X_1^d$.
\par
   (3) We set also
$$
    X_1^{+} = \bigcup_{d>0} X_1^d, \quad
        \mbox{and} \quad
    X_1^{(d)} = \bigcap_{0<a<d} X_1^a \quad \mbox{(for $d>0$)}.
$$
\end{defin}

   We note that $t^{\mu} \in X_1^{\mu}$, 
$t^{\mu}|\log t| \not\in X_1^{\mu}$, but 
$t^{\mu}|\log t| \in X_1^{(\mu)}$. Let us define another class of
functions.

\begin{defin}\label{Definition2.2}
    For $T>0$ and $R>0$ we denote by 
${\mathscr X}_1([0,T] \times D_R)$ the set of all $u(t,x) \in 
X_1([0,T] \times D_R)$ satisfying
\begin{equation}\label{2.2}
     \varlimsup_{\rho \to +0}  \biggl[ \, 
          \lim_{\sigma \to +0}  \Bigl( \frac{1}{\rho^2}
       \sup_{(0,\sigma) \times D_{\rho}}|u(t,x)| \Bigr)  \biggr]
        =  0.
\end{equation}
We set
\[
     {\mathscr X}_1 = \varinjlim_{(T,R) \to (0,0)}
           {\mathscr X}_1([0,T] \times D_R).
\]
\end{defin}

   It is clear that $X_1^+ \subset {\mathscr X}_1$ holds. 
If $u(t,x) \in X_1([0,T] \times D_R)$ satisfies
\[
     \lim_{t \to +0} \Bigl( \sup_{x \in D_R}|u(t,x)| \Bigr)
         = 0
\]
we have $u(t,x) \in {\mathscr X}_1$. If $u(t,x)=x^m$ for some
$m \in \{3,4,\ldots\}$ we have $u(t,x) \in {\mathscr X}_1$.
Therefore, the class ${\mathscr X}_1$ is much wider than $X_1^+$.
\par
   Now, let us state our main results of this paper.
We denote by ${\mathcal O}_0$ the set of all germs of holomorphic
functions $\psi(x)$ at $x=0$. For $X=X_1^{(\mu)}$,
$X^+_1$ or ${\mathscr X}_1$, we denote by ${\mathcal S}((\ref{2.1}),X)$ 
the set of all solutions of (\ref{2.1}) belonging to the class $X$.

\begin{thm}\label{Theorem2.3}
    Suppose the conditions ${\rm A}_1) \sim {\rm A}_3)$, 
{\rm (\ref{1.3})} and {\rm c1)} $\sim$ {\rm c3)}.
Then, we have the following results.
\par
   {\rm (1)} We have ${\mathcal S}((\ref{2.1}),X_1^{(\mu)}) \ne \emptyset$.
\par
   {\rm (2)} Take any $u_0(t,x) \in {\mathcal S}((\ref{2.1}),X_1^{(\mu)})$ 
and fix it. Then, for any $\psi(x) \in {\mathcal O}_0$ the equation 
{\rm (\ref{2.1})} has a unique solution $u(t,x) \in X^+_1$ which is 
expressed in the form 
\[
       u(t,x)=u_0(t,x)+w(t,x)
\]
for some $w(t,x) \in X^+_1$ satisfying
\[
     \lim_{t \to +0} \bigl(t^{-\lambda(0,x)}w(t,x) \bigr)
        = \psi(x) \quad \mbox{uniformly near $x=0$}.
\]
From now, we write this solution as $U(\psi)(t,x)$.
\par
   {\rm (3)} If $u_0(t,x)$ is fixed, we have
\[
    {\mathcal S}((\ref{2.1}),{\mathscr X}_1)
     ={\mathcal S}((\ref{2.1}),X_1^+)
     = \{U(\psi) \,;\, \psi(x) \in {\mathcal O}_0 \}.
\]
\end{thm}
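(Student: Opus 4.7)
My plan is to treat the three parts in order, each reducing to a fixed-point problem obtained by rewriting (\ref{2.1}) in integral form via the integrating factor $s^{-\lambda(0,x)}$, and to estimate inside a family of analytic disks whose radius is allowed to shrink (a Nagumo-type scale), so that the single derivative loss $\partial_x$ in the transport term $b\,\partial_x u$ is absorbed by the factor $|\log s|^{-1}$ in c3). For (1), I would rewrite (\ref{2.1}) as $t\partial_t u-\lambda(0,x)u = a(t,x) + G(t,x,u,\partial_x u)$, where $G$ collects the remainders $(\lambda(t,x)-\lambda(0,x))u$, $b(t,x)\partial_x u$, and $R$. Since $\mathrm{Re}\,\lambda(0,0)>0$, this is equivalent to
\[
   u(t,x)=t^{\lambda(0,x)}\!\int_{0}^{t}s^{-\lambda(0,x)-1}\bigl[a(s,x)+G(s,x,u,\partial_x u)\bigr]\,ds
\]
(with a base-point $t_0>0$ if the integral at $s=0$ would diverge). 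Condition c1) endows the forcing $a$ with a gain $s^{\mu}$, c2) and c3) supply the linear part of $G$ with a vanishing factor $\mu(s)$, and the remaining contribution is quadratic in $(u,\partial_x u)$. On a small ball in $X_1^d([0,T]\times D_R)$ with $d$ slightly less than $\mu$ and $T$, $R$ small, the resulting operator is a contraction, giving $u_0\in X_1^{(\mu)}$.

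For (2), fix $u_0$ and set $u=u_0+w$. The equation for $w$ takes the form $t\partial_t w=\lambda(t,x)w+\widetilde b(t,x)\partial_x w+\widetilde R(t,x,w,\partial_x w)$, with $\widetilde b$ close to $b$ and $\widetilde R$ quadratic in $(w,\partial_x w)$. Writing $w(t,x)=t^{\lambda(0,x)}\psi(x)+v(t,x)$ absorbs the leading linear contribution (since $t^{\lambda(0,x)}\psi(x)$ exactly solves $tw_t=\lambda(0,x)w$), and $v$ satisfies an equation whose forcing is of strictly higher order in $t$ than $t^{\lambda(0,x)}$. Running the integrating-factor contraction of (1), now in a space of functions of type $o(t^{\mathrm{Re}\,\lambda(0,x)})$, produces a unique such $v\in X_1^+$; this defines $U(\psi)=u_0+t^{\lambda(0,x)}\psi(x)+v$ and establishes both existence and uniqueness of the solution with the prescribed trace.

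For (3), the inclusion $\{U(\psi):\psi\in{\mathcal O}_0\}\subset {\mathcal S}((\ref{2.1}),X_1^+)\subset {\mathcal S}((\ref{2.1}),{\mathscr X}_1)$ is immediate. The nontrivial step is to show that every $u\in{\mathcal S}((\ref{2.1}),{\mathscr X}_1)$ actually lies in $X_1^+$; once that is done, $w=u-u_0\in X_1^+$ satisfies the homogeneous-type equation of (2), the rigidity of the contraction there forces $w=U(\psi)-u_0$ for the uniquely determined $\psi(x)=\lim_{t\to +0}t^{-\lambda(0,x)}w(t,x)$, and the existence and holomorphy of this limit are extracted from the integral equation. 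The bootstrap from ${\mathscr X}_1$ to $X_1^+$ is the crux: plugging the weak decay (\ref{2.2}) into the variation-of-parameters formula for $w$, the integrating factor $t^{\lambda(0,x)}$ alone contributes a gain $t^{\mathrm{Re}\,\lambda(0,0)-\varepsilon}$, so a single pass should upgrade $|w|=o(\rho^2)$ on $(0,\sigma)\times D_\rho$ to a genuine power bound $|w|\le Ct^d$ for some $d>0$. This bootstrap is the main obstacle I anticipate, since (\ref{2.2}) only encodes joint vanishing in $(t,x)$ and is far weaker than uniform-in-$x$ decay in $t$; closing the estimate requires carefully balancing the Cauchy estimate on the shrinking disks $D_\rho$ (needed to control $\partial_x w$) against the $|\log t|^{-1}$ weight in c3), so as to deliver a bona fide $t^d$-bound rather than a logarithmically corrupted one that would only place $w$ in a class such as $X_1^{(d)}$.
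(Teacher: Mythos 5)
Your outline captures the right architecture (peel off a particular solution, prescribe the trace $\psi$ against the integrating factor $t^{\lambda(0,x)}$, then bootstrap $\mathscr X_1$ to $X_1^+$ using ${\rm Re}\,\lambda(0,0)>0$), but the step you yourself identify as the crux of (3) is left undone, and the mechanism you propose for it would not close as described. A ``single pass'' through the variation-of-parameters formula in $t$ alone cannot absorb the terms $b\,\partial_x u$ and $R_2(t,x,u,\partial_x u)\,\partial_x u$: they reappear on the right-hand side with a full derivative and no gain, and the Cauchy estimate on shrinking disks costs a factor $\rho^{-1}$ that the hypothesis (\ref{2.2}) alone does not repay inside an iteration. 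The paper's resolution is different in kind: it quasilinearizes, writing $R=R_1(t,x,u,q)u+R_2(t,x,u,q)q$ and treating $\lambda_1=R_1(t,x,u,q)$ and $b_1=R_2(t,x,u,q)$ as known coefficients, and then integrates the resulting \emph{linear} transport equation along the characteristics of the solution-dependent field $b+b_1$. Along each characteristic the equation becomes a scalar ODE, ${\rm Re}(\lambda+\lambda_1)\ge a>0$ gives $|u|\le r_1(t/T)^a$ in one stroke with no derivative loss, and the quadratic smallness $r_1=o(\rho^2)$ in (\ref{2.2}) is used precisely (and only) to guarantee that these characteristics survive down to $t=0$ and sweep out a full neighborhood $(0,T]\times D_{R/4}$ (Lemmas \ref{Lemma6.5.2}--\ref{Lemma6.5.6}). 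Without the passage to characteristics, or an equivalent device, your bootstrap does not go through; this is a genuine gap, not merely a deferred estimate.

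Two secondary points. First, in (1) the paper does not run a contraction near the critical rate: for $\mu>{\rm Re}\,\lambda(0,0)$ it substitutes $u=t^dw$ with ${\rm Re}\,\lambda(0,0)<d<\mu$ and invokes the known negative-real-part theory, and for $\mu\le{\rm Re}\,\lambda(0,0)$ it builds finitely many correctors $u_1,\dots,u_N$ by solving linear Cauchy problems along characteristics until the residual forcing is $O(t^{(N+1)d})$ with $(N+1)d>{\rm Re}\,\lambda(0,0)$; only then is a fixed point taken. Your one-shot contraction in $X_1^d$ with $d$ slightly below $\mu$ must confront the resonance $\mu={\rm Re}\,\lambda(0,x)$ and the choice of base point in the integral, which you do not address. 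Second, in (2) the function $t^{\lambda(0,x)}\psi(x)$ does not solve the full linear equation $Lw=0$; the residual it leaves (from $b\,\partial_x$ acting on it, producing a $\log t$, and from $\lambda(t,x)-\lambda(0,x)$) is only $O(\mu(t)t^{{\rm Re}\lambda})$, not $O(t^{{\rm Re}\lambda+\varepsilon})$, so your claim that the remaining forcing is ``of strictly higher order in $t$'' is not accurate as stated. The paper instead solves $Lv=0$ exactly (Lemma \ref{Lemma6.2.1}, via $w=t^{-\lambda(0,x)}v$ and characteristics, where c3)'s $|\log t|^{-1}$ is consumed), and iterates only on the genuinely quadratic remainder $R$, which carries the gain $t^{2d}$ with $2d>{\rm Re}\,\lambda(0,0)$.
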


\begin{rem}\label{Remark2.4}
    In \cite{LRT} and \cite{unique}, in the case 
${\rm Re}\lambda(0,0)<0$ we have discussed (\ref{2.1}) under the 
condition
\begin{equation}\label{2.3}
    |a(t,x)| \leq A\mu(t) \quad \mbox{on $[0,T_0] \times D_{R_0}$}.
\end{equation}
But, in the case ${\rm Re}\lambda(0,0)>0$ it is difficult to 
solve (\ref{2.1}) under (\ref{2.3}). This is the reason why we suppose 
a stronger condition $|a(t,x)| \leq At^{\mu}$ instead of (\ref{2.3}).
\end{rem}

\begin{exam}\label{Example2.5}
    In order to understand the situation in 
Theorem \ref{Theorem2.3}, let us consider the following equation:
\begin{equation}\label{2.4}
    t \frac{\partial u}{\partial t}
    = x t^{\mu} +  \lambda u 
         + \Bigl( \frac{\partial u}{\partial x} \Bigr)^2.
\end{equation}
In this case we have $a(t,x)=x t^{\mu}$ with $\mu>0$, 
$\lambda(t,x)=\lambda$ with $\lambda>0$, 
and $b(t,x)=0$.
\par
   (1) By a calculation we see that (\ref{2.4}) has a solution
$u_0(t,x)$ of the form: in the case $\lambda \ne \mu, 2\mu$
\[
     u_0(t,x)= \frac{xt^{\mu}}{\mu-\lambda}
           + \frac{t^{2\mu}}{(\mu-\lambda)^2(2\mu-\lambda)};
\]
in the case $\lambda=2\mu$
\[
     u_0(t,x)= \frac{xt^{\mu}}{\mu-\lambda}
           + \frac{t^{2\mu}(\log t)}{(\mu-\lambda)^2};
\]
in the case $\lambda=\mu$
\[
     u_0(t,x)= xt^{\mu}(\log t)
           + \frac{t^{2\mu}(\log t)^2}{\mu}
        - \frac{2t^{2\mu}(\log t)}{\mu^2}
        + \frac{2t^{2\mu}}{\mu^3}.
\]
Thus, in the case $\lambda \ne \mu$ we have a solution 
$u_0(t,x) \in X_1^{\mu} \subset X_1^{(\mu)}$,
and in the case $\lambda=\mu$ we have a solution 
$u_0(t,x) \in X^{(\mu)}_1$ (but, $u_0(t,x) \not\in X^{\mu}_1$).
\par
   (2) Next, let us look for a solution of the form
$u(t,x)=u_0(t,x)+w(t,x)$. Our equation (\ref{2.4}) is reduced to 
an equation with respect to $w$:
\begin{equation}\label{2.5}
   t \frac{\partial w}{\partial t}
    = \lambda w + f(t) \frac{\partial w}{\partial x}
         + \Bigl( \frac{\partial w}{\partial x} \Bigr)^2
\end{equation}
with
\[
     f(t)= \left\{ \begin{array}{ll}
        2t^{\mu}/(\mu-\lambda),
          &\mbox{if $\lambda \ne \mu$}, \\[3pt]
         2t^{\mu}(\log t),
          &\mbox{if $\lambda= \mu$}.
     \end{array}  \right.
\]
Then, for any $\psi(x) \in {\mathcal O}_0$ the equation (\ref{2.5})
has a solution $W(\psi)$ of the form
\[
    W(\psi)= \sum_{k \geq 0}\psi_k(\eta(t)+x) t^{(k+1)\lambda}
         \quad
          \mbox{with $\psi_0(x)=\psi(x)$}
\]
where 
\[
    \eta(t)= \int_0^t \frac{f(\tau)}{\tau} d\tau
    = \left\{ \begin{array}{ll}
        2t^{\mu}/(\mu(\mu-\lambda)),
          &\mbox{if $\lambda \ne \mu$}, \\[3pt]
         2t^{\mu}(\log t)/\mu-2t^{\mu}/\mu^2,
          &\mbox{if $\lambda= \mu$}, 
     \end{array}  \right.
\]
and $\psi_k(x)$ ($k \geq 1$) are determined 
by the following recurrence formula:
\[
     \psi_k(x)= \frac{1}{k\lambda}
           \sum_{i+j=k-1}\psi_i^{(1)}(x)
                  \psi_j^{(1)}(x).
\]
\par
   (3) By Theorem \ref{Theorem2.3} we have
\[
    {\mathcal S}((\ref{2.4}),{\mathscr X}_1)
     ={\mathcal S}((\ref{2.4}),X_1^+)
     = \{u_0 + W(\psi) \,;\, \psi(x) \in {\mathcal O}_0 \}.
\]
\end{exam}

\begin{exam}\label{Example2.6}
      The following example will help the readers' 
understanding of the condition (\ref{2.2}). The equation
\begin{equation}\label{2.6}
    t \frac{\partial u}{\partial t}= u 
         - \Bigl( \frac{\partial u}{\partial x} \Bigr)^2
\end{equation}
has a solution $u=x^2/4$, and in this case we have 
\[
     \varlimsup_{\rho \to +0}  \biggl[ \, 
          \lim_{\sigma \to +0}  \Bigl( \frac{1}{\rho^2}
       \sup_{(0,\sigma) \times D_{\rho}}|u(t,x)| \Bigr)  \biggr]
        =  \frac{1}{4}
\]
and so $x^2/4 \not\in {\mathscr X}_1$. 
If we take a function class ${\mathscr F}$ which contains $x^2/4$,
then we have ${\mathcal S}((\ref{2.6}),{\mathscr F}) 
                            \ne {\mathcal S}((\ref{2.6}),X_1^+)$.
\par
   Thus, (\ref{2.2}) will be an optimal condition for 
${\mathcal S}((\ref{2.1}),{\mathscr X}_1)
              ={\mathcal S}((\ref{2.1}),X_1^+)$ to be valid.
\end{exam}

   As is seen in Example \ref{Example2.6}, some equations have
solutions which do not belong to ${\mathscr X}_1$. At present, the
author has no idea to treat such solutions.
\par
   The rest part of this paper is organized as follows.
In the next \S 3, we show 
${\mathcal S}((\ref{2.1}),X_1^{(\mu)}) \ne \emptyset$
in the case $\mu>{\rm Re}\lambda(0,0)$, as an application of
results in \cite{LRT} and \cite {unique}. In \S 4, we show 
${\mathcal S}((\ref{2.1}),X_1^{(\mu)}) \ne \emptyset$
in the case $\mu \leq {\rm Re}\lambda(0,0)$. In \S 5, we reduce
our problem to the case $a(t,x) \equiv 0$, and in the last 
\S 6, we show (2) and (3) of Theorem \ref{Theorem2.3}. in the case 
$a(t,x) \equiv 0$.

%
%

\section{Solvability in the case $\mu>{\rm Re}\lambda(0,0)$}
                \label{section3}
%

   In this section, we consider the equation (\ref{2.1}) under the 
following conditions:
\begin{align}
    &|a(t,x)| \leq A t^{\mu} 
              \quad \mbox{on $[0,T_0] \times D_{R_0}$}, 
         \label{3.1}\\
    &|b(t,x)| \leq B \mu(t) 
              \quad \mbox{on $[0,T_0] \times D_{R_0}$}, 
        \label{3.2}
\end{align}
where $A>0$, $\mu>0$, $B>0$ and $\mu(t)$ is a weight function
on $(0,T_0]$.

\begin{thm}\label{Theorem3.1} 
   Suppose {\rm (\ref{3.1})}, {\rm (\ref{3.2})} and 
$0< {\rm Re}\lambda(0,0) < \mu$. Then, the equation {\rm (\ref{2.1})} 
has a unique solution $u(t,x) \in X_1^{(\mu)}$.
\end{thm}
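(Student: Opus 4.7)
The strategy is to reduce Theorem \ref{Theorem3.1} to the existing $t$-continuous theory of \cite{LRT,unique}, which handles the case $\mathrm{Re}\,\lambda(0,0)<0$. The change of unknown $u(t,x) = t^{\nu} v(t,x)$ converts the positive-$\mathrm{Re}\,\lambda$ case into the negative one once $\nu$ is chosen so that $\mathrm{Re}\,\lambda(0,0) < \nu < \mu$; such a $\nu$ exists precisely because of the hypothesis $\mathrm{Re}\,\lambda(0,0)<\mu$.

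Substituting and dividing by $t^{\nu}$, equation (\ref{2.1}) takes the form
\[
  t\frac{\partial v}{\partial t}
    = t^{-\nu}a(t,x) + \bigl(\lambda(t,x)-\nu\bigr) v
      + b(t,x)\frac{\partial v}{\partial x}
      + t^{-\nu} R\bigl(t,x,t^{\nu}v,t^{\nu}\partial_x v\bigr),
\]
which has the same structural shape as (\ref{2.1}) with modified coefficients $\tilde a = t^{-\nu}a$, $\tilde\lambda = \lambda - \nu$, $\tilde b = b$, and nonlinearity
\[
  \tilde R(t,x,z_1,z_2)
    = \sum_{j+\alpha \geq 2} a_{j,\alpha}(t,x)\, t^{(j+\alpha-1)\nu}\, z_1^{j} z_2^{\alpha}.
\]
I would then verify that this new equation falls squarely in the scope of \cite{LRT,unique}: the estimate $|\tilde a| \leq A t^{\mu-\nu}$ with $\mu-\nu>0$ is dominated by an admissible weight function (one may take $\tilde\mu(t) := \mu(t) + t^{\mu-\nu}$, still integrable against $d\tau/\tau$); the bound on $\tilde b$ is inherited unchanged from (\ref{3.2}); the new $\tilde R$ is continuous in $t$, holomorphic in $(x,z_1,z_2)$, and since $(j+\alpha-1)\nu>0$ for $j+\alpha\geq 2$ it vanishes identically at $t=0$, so trivially satisfies the analogues of ${\rm A}_2)$ and ${\rm A}_3)$; and crucially $\mathrm{Re}\,\tilde\lambda(0,0) = \mathrm{Re}\,\lambda(0,0)-\nu < 0$. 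Invoking \cite{LRT,unique} therefore produces a unique continuous solution $v(t,x)$ with $v$ and $\partial_x v$ continuous and bounded (indeed vanishing at $t=0$), and $u := t^{\nu}v$ is then a solution of (\ref{2.1}) lying in $X_1^{\nu}$.

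To upgrade membership from $X_1^{\nu}$ to $X_1^{(\mu)} = \bigcap_{0<a<\mu}X_1^{a}$, I would observe that the above construction is available for \emph{every} $\nu \in (\mathrm{Re}\,\lambda(0,0),\mu)$. The uniqueness part of \cite{LRT,unique} forces all the resulting solutions to coincide: if $u \in X_1^{\nu'}$ solves (\ref{2.1}) for some $\nu' > \nu$, then $t^{-\nu}u$ is a bounded continuous solution of the $\nu$-transformed equation with bounded $x$-derivative, hence equals the unique such $v$. Letting $\nu$ sweep the full interval $(\mathrm{Re}\,\lambda(0,0),\mu)$ yields $u \in X_1^{\nu}$ for every $\nu<\mu$, i.e.\ $u \in X_1^{(\mu)}$. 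The same comparison, applied to any competitor solution in $X_1^{(\mu)}$, yields uniqueness.

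The main obstacle I anticipate is bookkeeping rather than conceptual: one has to check carefully that each hypothesis of \cite{LRT,unique} survives the substitution — in particular that the modified nonlinearity $\tilde R$ still fits their Briot–Bouquet-type framework (continuity in $t$, holomorphy in the remaining variables, vanishing to second order in $(z_1,z_2)$ at the origin) and that the enlarged weight function $\tilde\mu$ simultaneously controls $\tilde a$ and $\tilde b$ on a common neighborhood of $t=0$. Once those verifications are made, Theorem \ref{Theorem3.1} follows by direct citation of the earlier results.
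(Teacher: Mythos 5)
Your proposal is correct and follows essentially the same route as the paper: the substitution $u=t^{\nu}v$ with $\mathrm{Re}\,\lambda(0,0)<\nu<\mu$, the augmented weight function $\mu(t)+t^{\mu-\nu}$, the appeal to the existence and uniqueness theorems of \cite{LRT,unique} for the transformed equation with $\mathrm{Re}(\lambda-\nu)(0,0)<0$, and the sweep of $\nu$ over the interval to land in $X_1^{(\mu)}$ are exactly the paper's argument. The only caution is that in the uniqueness step the cited result requires the vanishing condition (\ref{2.2}) rather than mere boundedness of $t^{-\nu}u$; this is satisfied in your setting because $t^{-\nu}u\to 0$ uniformly when $u\in X_1^{\nu'}$ with $\nu'>\nu$, so the argument goes through.
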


   Since this is an easy consequence of results in 
\cite{LRT, unique}, let us recall some results in \cite{LRT, unique}.
For a weight function $\mu(t)$ we define a function $\varphi(t)$
by
\begin{equation}\label{3.3}
     \varphi(t)= \int_0^t \frac{\mu(\tau)}{\tau} d\tau, 
     \quad 0<t \leq T_0,
\end{equation}
and for $T>0$, $R>0$ and $r>0$ we set
\[
    W_{T,R,r}=\{(t,x) \in [0,T] \times D_R \,;\,
          \varphi(t)/r+|x|<R \}.
\]

\begin{thm}\label{Theorem3.2}
    Suppose the conditions {\rm (\ref{3.2})}, 
${\rm Re}\lambda(0,0)<0$, and
\[
       |a(t,x)| \leq A\mu(t) \quad
                  \mbox{on $[0,T_0] \times D_{R_0}$}
\]
for some $A>0$. Then, we have the following results.
\par
   {\rm (1)(Theorem 1.1 (with $\alpha=1$) in \cite{LRT})}. 
There are $T>0$, $R>0$, $r>0$ 
and $M>0$ such that equation 
{\rm (\ref{2.1})} has a unique solution $u_0(t,x) \in X_1(W_{T,R,r})$ 
satisfying
\[
    |u_0(t,x)| \leq M\mu(t) \enskip \mbox{and} \enskip
    \Bigl| \frac{\partial u_0}{\partial x}(t,x) \Bigr| 
       \leq M\mu(t) \quad \mbox{on $W_{T,R,r}$}.
\]
\par
   {\rm (2)(Theorem 2.2 in \cite{unique})}.
Let $u(t,x) \in X_1((0,T] \times D_R)$ be a solution of 
{\rm (\ref{2.1})} with $T>0$ and $R>0$. If $u(t,x)$ satisfies
\[
     \varlimsup_{\rho \to +0} \, \biggl[ 
          \lim_{\sigma \to +0}  \Bigl( \frac{1}{\rho^2}
       \sup_{(0,\sigma) \times D_{\rho}}|u(t,x)| \Bigr)  \biggr]
        =  0, 
\]
we have $u(t,x)=u_0(t,x)$ on $(0,\epsilon] \times D_{\delta}$ for 
some $\epsilon>0$ and $\delta>0$, where $u_0(t,x)$ is the solution 
of {\rm (\ref{2.1})} obtained in {\rm (1)}.
\end{thm}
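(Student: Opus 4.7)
Since Theorem \ref{Theorem3.2} quotes two results already proved in \cite{LRT} and \cite{unique}, the ``proof proposal'' below describes how I would reprove each part from scratch.

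For part (1), my plan is to recast (\ref{2.1}) as a fixed-point problem via the integrating factor $\tau^{-\lambda(0,x)}$. Any continuous solution vanishing at $t=0$ must satisfy
\begin{equation*}
   u(t,x) = \int_0^t \Bigl(\frac{t}{\tau}\Bigr)^{\!\lambda(0,x)}
         G(\tau,x,u,\partial_x u)\,\frac{d\tau}{\tau},
\end{equation*}
where $G$ assembles $a(\tau,x)$, $(\lambda(\tau,x)-\lambda(0,x))u$, $b(\tau,x)\partial_x u$, and $R(\tau,x,u,\partial_x u)$. The hypothesis $\mathrm{Re}\,\lambda(0,0)<0$ yields $\mathrm{Re}\,\lambda(0,x)<0$ on a small disk around $x=0$, so $|(t/\tau)^{\lambda(0,x)}|\leq 1$ for $0<\tau\leq t$. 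I would then fix the complete metric space of $u\in X_1(W_{T,R,r})$ with $\max\{|u|,|\partial_x u|\}\leq M\mu(t)$ and check, using $\int_0^t \mu(\tau)/\tau\,d\tau=\varphi(t)$ and the smallness of $\varphi(T)$, that the operator above is a contraction for appropriate $T,R,r,M$. The geometric domain $W_{T,R,r}$ is forced by the transport term $b(t,x)\partial_x u$: the characteristic drift of $x$ is at most $B\varphi(t)$, so the analytic radius in $x$ must shrink linearly in $\varphi(t)$. Uniqueness within this class is built into the contraction.

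For part (2), let $u\in X_1((0,T]\times D_R)$ be any solution satisfying (\ref{2.2}); write $w=u-u_0$ and derive
\begin{equation*}
   t\,\frac{\partial w}{\partial t}
    = \widetilde\lambda(t,x)w + \widetilde b(t,x)\frac{\partial w}{\partial x}
      + \widetilde R(t,x,w,\partial_x w),
\end{equation*}
with $\widetilde\lambda(0,0)=\lambda(0,0)$ and $\widetilde R$ quadratic in $(w,\partial_x w)$. The analogous integral representation
\begin{equation*}
   w(t,x) = \int_0^t \Bigl(\frac{t}{\tau}\Bigr)^{\!\lambda(0,x)}
         \widetilde G(\tau,x,w,\partial_x w)\,\frac{d\tau}{\tau}
\end{equation*}
holds. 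Hypothesis (\ref{2.2}) furnishes, for each $\epsilon>0$, a pair $(\sigma,\rho)$ such that $|w(t,x)|\leq\epsilon\rho^2$ on $(0,\sigma]\times D_\rho$; Cauchy's estimates then upgrade this to $|\partial_x w|\leq C\epsilon\rho$ on $(0,\sigma]\times D_{\rho/2}$. Substituting into the integral equation and using $|(t/\tau)^{\lambda(0,x)}|\leq 1$, I would bootstrap in small successive steps to improve the bound to $\max\{|w|,|\partial_x w|\}\leq C'\mu(t)$ on a smaller domain, placing $w$ inside the uniqueness class of part (1); hence $w\equiv 0$ near $(0,0)$.

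The main obstacle is in part (2): hypothesis (\ref{2.2}) controls only $w$, not $\partial_x w$, yet the right-hand side involves $\partial_x w$ through both $\widetilde b\,\partial_x w$ and the quadratic terms. Recovering a derivative bound via Cauchy costs a fraction of the analytic disk at each iteration, and this loss must be balanced against the geometric shrinkage $\varphi(t)/r$ imposed by the first-order term. Arranging the nested disks $D_{\rho_n}$ so that they stabilize at a strictly positive radius, rather than collapsing, is the delicate point; this is essentially what singles out (\ref{2.2}) as the correct vanishing hypothesis under which the uniqueness class can be enlarged.
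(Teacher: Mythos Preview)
The paper gives no proof of Theorem~\ref{Theorem3.2}: both parts are stated as direct quotations of \cite{LRT} and \cite{unique} and are used as black boxes in the proof of Theorem~\ref{Theorem3.1}. There is therefore no in-paper argument to compare your proposal against.

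That said, one can infer the method of \cite{unique} for part~(2) from \S\ref{subsection6.5} of the present paper, which explicitly says it adapts that proof to the case $\mathrm{Re}\,\lambda(0,0)>0$. The route taken there is \emph{not} your integral-equation bootstrap but a characteristics argument. One linearizes (\ref{2.1}) along the given solution $u$ itself (not along $u_0$), obtaining a closed linear system for the pair $(u,q)$ with $q=\partial_x u$ (equations (\ref{6.5.3})--(\ref{6.5.4})); one then launches characteristic curves from $t=T$ and integrates the two resulting ODEs explicitly along each curve (Lemma~\ref{Lemma6.5.4}). Condition~(\ref{2.2}) enters only through a single choice of $T,R$ making the nonlinear coefficients $\lambda_1,b_1,c$ small enough (Lemma~\ref{Lemma6.5.2}) that the characteristic tube provably covers a full rectangle $(0,T]\times D_{R/4}$ (Lemma~\ref{Lemma6.5.6}). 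This sidesteps the iterated Cauchy-estimate radius loss that you correctly identify as the obstacle in your scheme: the derivative $q$ is controlled by an explicit ODE bound along each characteristic, and a Cauchy estimate is invoked only once, to get a Lipschitz constant for the characteristic map. Your bootstrap may be salvageable, but the radius management it requires is genuinely delicate, whereas the characteristics method turns it into a non-issue.
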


\begin{proof}[Proof of Theorem \ref{Theorem3.1}]
   Suppose (3.1), (3.2) and $0< {\rm Re}\lambda(0,0) < \mu$. 
Take any $d>0$ satisfying ${\rm Re}\lambda(0,0) < d<\mu$, and set
$u=t^dw$. Then our equation (\ref{2.1}) is written as an equation 
with respect to $w$:
\begin{align}
   t \,\frac{\partial w}{\partial t}
   = t^{-d}a(t,x) &+ (\lambda(t,x)-d)w+ b(t,x) \frac{\partial w}{\partial x} 
             \label{3.4}\\
   &\qquad+ \sum_{j+\alpha \geq 2} a_{j,\alpha}(t,x) t^{d(j+\alpha-1)}
           w^j\Bigl(\frac{\partial w}{\partial x} \Bigr)^{\alpha}.
        \notag
\end{align}
If we set $\mu_0(t)=\mu(t)+t^{\mu-d}$, this $\mu_0(t)$ is also
a weight function on $(0,T_0]$ and we have
\begin{align*}
   &|t^{-d}a(t,x)| \leq A\mu_0(t) 
              \quad \mbox{on $[0,T_0] \times D_{R_0}$}, \\
   &|b(t,x)| \leq B\mu_0(t) 
              \quad \mbox{on $[0,T_0] \times D_{R_0}$}.
\end{align*}
Since ${\rm Re}(\lambda(0,0)-d)<0$ holds, by (1) of 
Theorem \ref{Theorem3.2} we have a solution 
$w(t,x) \in X_1([0,T] \times D_R)$ of (\ref{3.4}) satisfying
\[
    |w(t,x)| \leq M\mu_0(t) \enskip \mbox{and} \enskip
    \Bigl| \frac{\partial w}{\partial x}(t,x) \Bigr| 
       \leq M\mu_0(t) \quad \mbox{on $[0,T] \times D_R$}
\]
for some $T>0$, $R>0$ and $M>0$. Thus, by setting
$u(t,x)=t^dw(t,x)$ we have a solution $u(t,x) \in X^d_1$
of (\ref{2.1}) satisfying
\begin{equation}\label{3.5}
     \lim_{t \to +0} \Bigl( \sup_{x \in D_R}|t^{-d}u(t,x)| \Bigr)
      = 0 
\end{equation}
for some $R>0$. 
\par
   The uniqueness of such a solution is verified as follows.
Let $u(t,x) \in X^d_1$ be a solution of (\ref{2.1}) satisfying
(\ref{3.5}). Set $w(t,x)=t^{-d}u(t,x)$: then $w(t,x)$ is a solution
of (\ref{3.4}). By the condition (\ref{3.5}) we can apply (2) of 
Theorem \ref{Theorem3.2} to this situation. Hence we have the 
uniqueness of $w(t,x)$ which yields the uniqueness of $u(t,x)$.
\par
   Since $d$ is taken arbitrarily so that
${\rm Re}\lambda(0,0) < d<\mu$, we have the result 
$u(t,x) \in X_1^{(\mu)}$.
\end{proof}

%
%

\section{Solvability in the case $0<\mu \leq {\rm Re}\lambda(0,0)$}
       \label{section3}
%

    In this section, we will show

\begin{thm}\label{Theorem4.1}
    Suppose {\rm (\ref{3.1})}, {\rm (\ref{3.2})} and 
$0<\mu \leq {\rm Re}\lambda(0,0)$. Then, the equation {\rm (\ref{2.1})} 
has a solution $u(t,x) \in X_1^{\mu}$ in the case 
$\mu< {\rm Re}\lambda(0,0)$ {\rm (}resp. $u(t,x) \in X_1^{(\mu)}$ 
in the case $\mu={\rm Re}\lambda(0,0)${\rm )}.
\end{thm}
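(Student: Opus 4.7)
The plan is to construct the solution via a fixed-point argument for the integral reformulation of (\ref{2.1}). Since $\mathrm{Re}\lambda(0,0) \geq \mu > 0$, the linear operator $Lu = t\partial_t u - \lambda(t,x)u$ cannot be inverted by integration from $t=0$; instead I would invert it by backward integration from a small time $T>0$, defining
$$E(t,s,x) = \exp\!\left(\int_s^t \frac{\lambda(r,x)}{r}\,dr\right), \qquad \mathcal{K}g(t,x) = -\int_t^T \frac{E(t,s,x)}{s}\,g(s,x)\,ds,$$
so that $L(\mathcal{K}g) = g$ with $(\mathcal{K}g)(T,x) = 0$. Condition c2) together with $\int_0^{T_0}\mu(\tau)/\tau\,d\tau < \infty$ yields the uniform bound $|E(t,s,x)| \leq M(t/s)^{\mathrm{Re}\lambda(0,x)}$ for $0 < t \leq s \leq T$. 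From this, a direct computation gives the key linear estimate: if $|g(t,x)| \leq C t^\alpha$ with $\alpha > 0$, then $|\mathcal{K}g(t,x)| \leq C' t^\alpha$ when $\alpha < \mathrm{Re}\lambda(0,x)$, while $|\mathcal{K}g(t,x)| \leq C' t^\alpha (1 + |\log t|)$ when $\alpha = \mathrm{Re}\lambda(0,x)$; analogous bounds hold for $\partial_x \mathcal{K}g$ via Cauchy's inequality on a slightly smaller disk.

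I would then set up the Picard iteration $u^{n+1} = \mathcal{K}[\,a + b\,\partial_x u^n + R(t,x,u^n,\partial_x u^n)\,]$ with $u^0 = 0$, inside a ball in $X_1^\alpha([0,T]\times D_R)$ for some fixed $\alpha \leq \mu$ chosen so that $\alpha < \mathrm{Re}\lambda(0,x)$ for all $x \in D_R$. By (\ref{3.1}), $|a| \leq A t^\mu$ contributes the leading $t^\mu$ term; by (\ref{3.2}) and the inductive bound, $b\,\partial_x u^n$ is of order $\mu(t) t^\alpha$, which is strictly smaller than $t^\alpha$ since $\mu(t) \to 0$; and $R(u^n, \partial_x u^n)$ is of order $t^{2\alpha}$ because $R$ starts at degree two. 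Applying the linear estimate, the iteration stays inside the ball and, after shrinking $T$ and $R$, becomes a contraction. In the case $\mu < \mathrm{Re}\lambda(0,0)$ we may take $\alpha = \mu$ directly, and the fixed point belongs to $X_1^\mu$.

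The main obstacle is the boundary case $\mu = \mathrm{Re}\lambda(0,0)$: here taking $\alpha = \mu$ introduces a factor $|\log t|$ in the linear estimate, which would accumulate geometrically under iteration. I would bypass this by running the contraction separately for each $\alpha \in (0,\mu)$, shrinking $R$ so that $\alpha < \mathrm{Re}\lambda(0,x)$ on $D_R$: each such contraction yields a fixed point $u_\alpha \in X_1^\alpha$, and since all of them satisfy the same integral equation, they coincide as germs at the origin, so their common value lies in $\bigcap_{0<\alpha<\mu} X_1^\alpha = X_1^{(\mu)}$, as required by the statement. A secondary technical difficulty is controlling $\partial_x \mathcal{K}g$, which picks up an extra factor from $\partial_x E(t,s,x)$ and requires careful handling via Cauchy estimates on shrunken disks; this is routine in the holomorphic-in-$x$ setting but must be tracked carefully to preserve the contraction constant.
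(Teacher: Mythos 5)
Your overall architecture (invert $t\partial_t-\lambda$ by backward integration from $t=T$, note the exact analogue of your ``key linear estimate'') does appear in the paper as Proposition 4.2 and Corollary 4.3, but the paper uses it only to solve \emph{linear} equations with \emph{known} right-hand sides, never as the basis of a Picard iteration for the full nonlinear equation. The gap in your proposal is the step you dismiss as a ``secondary technical difficulty'': the map $u\mapsto\mathcal{K}[a+b\,\partial_xu+R(t,x,u,\partial_xu)]$ loses a derivative. To estimate $\partial_x u^{n+1}$ you must differentiate the bracket, which brings in $\partial_x^2u^n$; the only way to control that from the ball data $\{|u|,|\partial_xu|\le Ct^\alpha \text{ on } D_R\}$ is Cauchy's inequality on a strictly smaller disk, and this shrinking must be repeated at \emph{every} iteration step, so there is no fixed pair $(T,R)$ on which the map is a self-map, let alone a contraction. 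This is the classical obstruction behind the Cauchy--Kowalevsky theorem; making your scheme rigorous would require the Nagumo/Nirenberg--Nishida scale-of-Banach-spaces machinery (or majorants along characteristics), i.e.\ essentially re-proving the existence theorem of Lope--Roque--Tahara from scratch. As written, ``after shrinking $T$ and $R$ [once] it becomes a contraction'' is false.

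The paper sidesteps this entirely. Its Theorem 3.1 handles the regime where the forcing has order $t^{\mu}$ with $\mu>\mathrm{Re}\,\lambda(0,0)$ by substituting $u=t^dw$ with $\mathrm{Re}\,\lambda(0,0)<d<\mu$, which turns the eigenvalue negative and lets the already-established existence theorem for $\mathrm{Re}\,\lambda(0,0)<0$ (Theorem 3.2(1)) do all the hard analytic work. When $\mu\le\mathrm{Re}\,\lambda(0,0)$ this substitution is blocked because $t^{-d}a$ would blow up, so the paper first peels off $N=[\mathrm{Re}\,\lambda(0,0)/\mu]$ explicit terms $u_1,\dots,u_N$ of orders $t^{d},t^{2d},\dots,t^{Nd}$, each obtained by solving a \emph{linear} transport equation along characteristics with a right-hand side built from the previously determined terms (no derivative loss arises there), until the residual forcing has order $t^{(N+1)d}$ with $(N+1)d>\mathrm{Re}\,\lambda(0,0)$; then Theorem 3.1 finishes. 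Your idea of intersecting over $\alpha<\mu$ to land in $X_1^{(\mu)}$ in the boundary case is sound in spirit and mirrors what the paper does, but it cannot rescue the proposal until the derivative-loss issue is addressed. If you want to salvage your route, either invoke the cited existence theorem after a reduction as the paper does, or set up the iteration honestly in a scale of spaces indexed by the radius.
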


   Set
\[
    L= t \frac{\partial}{\partial t}-\lambda(t,x)
        -b(t,x) \frac{\partial}{\partial x}.
\]
In the next subsection 4.1 we solve the Cauchy problem
\begin{equation}\label{4.1}
    Lw=g(t,x), \quad w \bigr|_{t=T}=\psi(x). 
\end{equation}
Then, by using the solvability of this Cauchy problem and 
Theorem \ref{Theorem3.1}, in subsection 4.2 we give a proof of 
Theorem \ref{Theorem4.1}.

%
%

\subsection{On the Cauchy problem (\ref{4.1})}\label{subsection4.1}
%

   Let $\varphi(t)$ be as in (\ref{3.3}). Take $0<T<T_0$ and 
$0<R <R_1<R_0$ so that
\[
    R+B\varphi(T)< R_1, \quad R_1+B\varphi(T) <R_0.
\]
For $(t_0,x_0) \in [0,T] \times D_{R_1}$, we consider the 
initial value problem
\begin{equation}\label{4.2}
    \frac{dx}{dt}= - \frac{b(t,x)}{t}, 
    \quad x \bigr|_{t=t_0}=x_0. 
\end{equation}
Since (\ref{3.2}) is supposed, it is easy to see that (\ref{4.2}) 
has a unique solution $x(t)$ on $[0,T]$: we write this solution
as $\phi(t;t_0,x_0)$. Set 
\[
    {\mathcal D}= \bigcup_{x_0 \in D_{R_1}}
      \{(t, \phi(t;T,x_0)) \,;\, 0<t \leq T \}.
\]
It is also easy to see that 
\[
     (0,T] \times D_R \subset {\mathcal D}
           \subset (0,T] \times D_{R_0}
\]
and that ${\mathcal D} \cap \{t=t_0 \}$ is an open
subset of $\BC$ for any $t_0 \in (0,T]$.

\begin{prop}\label{Proposition4.2}
    Under the above situation, for any $g(t,x) \in X_0({\mathcal D})$ 
and any holomorphic function $\psi(x)$ on $D_{R_1}$ the Cauchy problem 
{\rm (\ref{4.1})} has a unique solution $w(t,x) \in X_1({\mathcal D})$,
and it is given by
\begin{align}
   w(t,x)= &\psi(\phi(T;t,x)) \exp \Bigl[ -\int_{t}^{T}
     \frac{\lambda(s,\phi(s;t,x))}{s}ds \Bigr]
       \label{4.3}\\
       &- \int_t^T \exp \Bigl[ -\int_t^{\tau}
     \frac{\lambda(s,\phi(s;t,x))}{s}ds \Bigr]
        \frac{g(\tau,\phi(\tau;t,x))}{\tau} d\tau. \notag
\end{align}
\end{prop}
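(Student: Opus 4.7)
The plan is to use the method of characteristics. Fix $(t,x) \in {\mathcal D}$ and let $\phi(s;t,x)$ be the characteristic curve solving (4.2). For any candidate solution $w$, set $W(s) := w(s,\phi(s;t,x))$; the chain rule combined with (4.2) gives
\begin{equation*}
s \frac{dW}{ds} = s\,\partial_t w(s,\phi) - b(s,\phi)\,\partial_x w(s,\phi) = (Lw)(s,\phi) + \lambda(s,\phi)\,W(s),
\end{equation*}
so along each characteristic $W$ must satisfy the linear ODE
\begin{equation*}
s\,\frac{dW}{ds} - \lambda(s,\phi(s;t,x))\,W = g(s,\phi(s;t,x)), \qquad W(T) = \psi(\phi(T;t,x)).
\end{equation*}

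Next I would solve this ODE by the integrating factor
\begin{equation*}
E(s) = \exp\Bigl[-\int_t^s \frac{\lambda(\sigma,\phi(\sigma;t,x))}{\sigma}\,d\sigma\Bigr],
\end{equation*}
which reduces the equation to $(EW)' = E\,g(s,\phi)/s$. Integrating from $t$ to $T$ and using $E(t)=1$ together with $W(t)=w(t,x)$, $W(T)=\psi(\phi(T;t,x))$, one recovers exactly formula (4.3) after solving for $w(t,x)$. Because this derivation is reversible, the argument simultaneously yields uniqueness (any solution must equal its characteristic integral, since the homogeneous ODE with zero terminal value has only the trivial solution) and singles out (4.3) as the only candidate for existence.

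It remains to verify that the right-hand side of (4.3) actually defines an element of $X_1({\mathcal D})$ solving (4.1). The key ingredient is regularity of the characteristic flow $\phi(s;t_0,x_0)$: it is continuous in $(s,t_0,x_0) \in [0,T]^2 \times D_{R_1}$ and, for each fixed $(s,t_0)$, holomorphic in $x_0$. Continuity follows from (3.2) together with the integrability of $\mu(\tau)/\tau$. Holomorphy in $x_0$ is obtained by recasting (4.2) as an integral equation and running a Picard iteration: since $b(t,\cdot)$ is holomorphic, every iterate is holomorphic in $x_0$, and uniform convergence on compact subdiscs transfers holomorphy to the limit. Granted these properties of $\phi$, (4.3) is continuous on ${\mathcal D}$ and holomorphic in $x$ for fixed $t$; differentiating under the integral sign (and using the flow identity $\phi(s;\tau,\phi(\tau;t,x))=\phi(s;t,x)$ to simplify $\partial_t$ and $\partial_x$ of the inner integrals) shows $w$ is $C^1$ on ${\mathcal D}\cap\{t>0\}$ with $Lw=g$, $w|_{t=T}=\psi$, and $t\,\partial_t w \in X_0({\mathcal D})$.

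The main technical obstacle is this last holomorphic dependence on the initial datum $x_0$, because the classical analytic-ODE theorem does not apply verbatim when $b$ is only continuous in $t$. I would resolve it by setting up the Picard iteration in a Banach space of functions continuous in $s$ with values in the space of bounded holomorphic functions on a suitably shrinking disc, where the operator associated with (4.2) is a contraction and preserves holomorphy in $x_0$; equivalently, one can pass holomorphy through the $s$-integral by Morera's theorem applied slicewise. Once this regularity of $\phi$ is in hand, every other claim in the proposition is a routine verification from formula (4.3).
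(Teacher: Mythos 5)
Your proposal is correct and follows essentially the same route as the paper: integrate the linear ODE along the characteristics of $\partial_t - (b/t)\partial_x$ to obtain (4.3), which yields uniqueness at once, and then check that the right-hand side of (4.3) defines a function in $X_1({\mathcal D})$. The only difference is cosmetic (you parametrize characteristics from $(t,x)$ rather than from the slice $t=T$, which is equivalent via the flow identity), and you supply more detail than the paper on the holomorphic dependence of $\phi$ on the initial point, a verification the paper leaves implicit.
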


\begin{proof}
    Let $w(t,x) \in X_1({\mathcal D})$ be a 
solution of (\ref{4.1}). Take any $x_0 \in D_{R_1}$ and set
\begin{align*}
   &w^*(t)=w(t,\phi(t;T,x_0)), \\
   &\lambda^*(t)= \lambda(t,\phi(t;T,x_0)), \\
   &g^*(t)=g(t,\phi(t;T,x_0)).
\end{align*}
Then, our problem (\ref{4.1}) is written in the form
\[
     t \frac{dw^*}{dt} - \lambda^*(t)w^*=g^*(t),
     \quad w^*(T)=\psi(x_0).
\]
By integrating this we have
\begin{align*}
   w^*(t)= &\psi(x_0) \exp \Bigl[ -\int_{t}^{T}
     \frac{\lambda^*(s)}{s}ds \Bigr]\\
       &\qquad \qquad - \int_t^T \exp \Bigl[ -\int_t^{\tau}
     \frac{\lambda^*(s)}{s}ds \Bigr] \frac{g^*(\tau)}{\tau}
        d\tau.
\end{align*}
Therefore, we have
\begin{align*}
   w(t,\phi(t;T,x_0))= &\psi(x_0) \exp \Bigl[ -\int_{t}^{T}
     \frac{\lambda(s,\phi(s;T,x_0))}{s}ds \Bigr] \\
       &- \int_t^T \exp \Bigl[ -\int_t^{\tau}
     \frac{\lambda(s,\phi(s;T,x_0))}{s}ds \Bigr] 
       \frac{g(\tau,\phi(\tau;T,x_0))}{\tau} d\tau.
\end{align*}
\par
   Take any $(t,x) \in {\mathcal D}$; then we have a unique 
$x_0 \in D_{R_1}$ such that $x=\phi(t;T,x_0)$.
In this case, we have $x_0=\phi(T;t,x)$ and 
$\phi(s;T,x_0)=\phi(s;t,x)$ for $0 <s \leq T$. Therefore,
by replacing $x_0$ and $\phi(s;T,x_0)$ by $\phi(T;t,x)$ and 
$\phi(s;t,x)$, respectively,  in the above formula we finally 
obtain
\begin{align*}
   w(t,x)= &\psi(\phi(T;t,x)) \exp \Bigl[ -\int_{t}^{T}
     \frac{\lambda(s,\phi(s;t,x))}{s}ds \Bigr] \\
       &- \int_t^T \exp \Bigl[ -\int_t^{\tau}
     \frac{\lambda(s,\phi(s;t,x))}{s}ds \Bigr]
        \frac{g(\tau,\phi(\tau;t,x))}{\tau} d\tau.
\end{align*}
This proves (\ref{4.3}) and also the uniqueness
of the solution of (\ref{4.1}).
\par
   To show the existence of a solution, we have only to notice
that the right side of (\ref{4.3}) is well-defined as a function 
in $X_1({\mathcal D})$. Thus, the existence of a solution is also 
shown. 
\end{proof}

\begin{cor}\label{Corollary4.3}
    Suppose that 
$0<a_0 \leq {\rm Re}\lambda(t,x)$ holds on ${\mathcal D}$.
If $\psi(x)$ and $(\partial \psi/\partial x)(x)$ are bounded 
on $D_{R_1}$ and if $g(t,x) \in X_1^{\mu}({\mathcal D})$ for some 
$\mu$ satisfying $0<\mu<a_0$, then the unique solution $w(t,x)$ 
of {\rm (\ref{4.1})} belongs to $X_1^{\mu}({\mathcal D})$.
\end{cor}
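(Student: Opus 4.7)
\textbf{Proof plan for Corollary \ref{Corollary4.3}.} The plan is to estimate each of the two terms in the explicit formula (\ref{4.3}) directly, using the positivity of $\mathrm{Re}\,\lambda$ to turn the exponential factor into a controllable power of $t$. First I would show the pointwise estimate
\[
   \Bigl| \exp\Bigl[-\int_{t}^{\tau}\frac{\lambda(s,\phi(s;t,x))}{s}ds\Bigr] \Bigr|
   \;\le\; (t/\tau)^{a_0} \qquad (0<t \le \tau \le T),
\]
which follows immediately from $\mathrm{Re}\,\lambda(s,\phi(s;t,x)) \ge a_0$. Applied to the first term of (\ref{4.3}) with $\tau=T$, together with $|\psi(\phi(T;t,x))| \le \|\psi\|_{\infty}$, this gives a bound of order $(t/T)^{a_0} \le T^{\mu-a_0}\,t^{\mu}$ since $a_0>\mu$. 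For the integral term, using $|g(\tau,\phi(\tau;t,x))| \le C\tau^{\mu}$, I compute
\[
   \int_t^T (t/\tau)^{a_0}\,\tau^{\mu-1}\,d\tau
     \;=\; t^{a_0}\cdot\frac{t^{\mu-a_0}-T^{\mu-a_0}}{a_0-\mu}
     \;\le\; \frac{t^{\mu}}{a_0-\mu},
\]
which yields $|w(t,x)| \le C' t^{\mu}$ on $\mathcal{D}$.

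Second, I would estimate $\partial w/\partial x$ by differentiating (\ref{4.3}) in $x$. For this I need control of $(\partial \phi/\partial x)(s;t,x)$. From the variational equation associated with (\ref{4.2}),
\[
   \frac{d}{ds}\Bigl(\frac{\partial \phi}{\partial x}\Bigr)
    = -\frac{1}{s}\,\frac{\partial b}{\partial x}(s,\phi)\,\frac{\partial \phi}{\partial x},
    \qquad \frac{\partial \phi}{\partial x}\Big|_{s=t}=1,
\]
so that $\partial \phi/\partial x$ is an exponential of $\int (\partial b/\partial x)(s,\phi)\,ds/s$. Combining the hypothesis $|b(t,x)|\le B\mu(t)$ with a Cauchy estimate on a slightly smaller polydisc gives $|\partial b/\partial x| \le C\mu(t)$, hence $\int_0^{T_0} |\partial b/\partial x|(s,\cdot)\,ds/s < \infty$ by the weight-function condition. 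This yields a uniform bound $|\partial \phi/\partial x(s;t,x)| \le K$ on the relevant set.

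Third, differentiating (\ref{4.3}) produces four types of terms: derivatives of $\psi$ at $\phi(T;t,x)$; derivatives of $\lambda$ integrated along the characteristic; derivatives of $g$ at $(\tau,\phi(\tau;t,x))$; and derivatives of $\phi$. The first two are uniformly bounded (using the hypothesis on $\partial\psi/\partial x$ and a Cauchy estimate for $\partial\lambda/\partial x$ together with $\int_0^T ds/s$ being finite away from $t=0$ — more precisely we get a factor of $|\log t|$ from $\int_t^T ds/s$, but it is multiplied by $(t/T)^{a_0}$, which still gives $O(t^{\mu})$ because $t^{a_0}|\log t| = o(t^{\mu})$). The third uses $|\partial g/\partial x| \le C\tau^{\mu}$ (Cauchy estimate applied to $g(\tau,\cdot) \in X_1^{\mu}$) and the same integral computation as before. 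Combining these estimates gives $|\partial w/\partial x(t,x)| \le C'' t^{\mu}$.

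The main obstacle is the bookkeeping in Step~3: one has to make sure that the logarithmic factors arising from $\int_t^T \partial_x\lambda\,ds/s$ do not spoil the $t^{\mu}$ bound, and that the Cauchy estimates for $\partial b/\partial x$, $\partial\lambda/\partial x$, and $\partial g/\partial x$ can all be performed on a common subdomain of $\mathcal{D}$. Both issues are handled by slightly shrinking $R,R_1$ (or equivalently choosing a subdomain $\mathcal{D}'\Subset\mathcal{D}$) and by observing that $t^{a_0}|\log t|^{k} = o(t^{\mu})$ as $t\to +0$ for any $k\ge 0$, since $a_0>\mu$. Finally, the $C^1$-regularity of $w$ in $t$ (so that $w \in X_1^{\mu}$, not just $X_0$) is immediate from the equation $Lw = g$ together with $w \in X_0$ and the estimates above.
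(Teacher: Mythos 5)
Your proposal is correct and follows essentially the same route as the paper: the bound $|w(t,x)|\le M(t/T)^{a_0}+\frac{G}{a_0-\mu}t^{\mu}$ obtained from (\ref{4.3}) via $\bigl|\exp[-\int_t^{\tau}\lambda/s\,ds]\bigr|\le(t/\tau)^{a_0}$ and the integral $\int_t^T(t/\tau)^{a_0}\tau^{\mu-1}d\tau\le t^{\mu}/(a_0-\mu)$ is exactly the paper's computation. The paper dispatches the estimate of $\partial w/\partial x$ with the phrase ``in the same way,'' and your Steps 2--3 (bounding $\partial\phi/\partial x$ by the variational equation and absorbing the logarithmic factor from $\int_t^T\partial_x\lambda\,ds/s$ using $t^{a_0}|\log t|=o(t^{\mu})$) are a correct and welcome filling-in of that omitted detail.
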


\begin{proof}
    Suppose that $|\psi(x)| \leq M$ on $D_{R_1}$ for
some $M>0$, and $|g(t,x)| \leq Gt^{\mu}$ on ${\mathcal D}$ for some
$G>0$. Since
\[
    \Bigl|\exp \Bigl[ -\int_{t}^{T}
         \frac{\lambda(s,\phi(s;T,x_0))}{s}ds \Bigr] \Bigr|
    \leq \exp \Bigl[ -\int_{t}^{T}
     \frac{a_0}{s}ds \Bigr]
      = \Bigl( \frac{t}{T} \Bigr)^{a_0}
\]
holds, by (\ref{4.3}) we have
\begin{align*}
   |w(t,x)| &\leq M\Bigl( \frac{t}{T} \Bigr)^{a_0}
       + \int_t^T \Bigl( \frac{t}{\tau} \Bigr)^{a_0}
        \frac{G\tau^{\mu}}{\tau} d\tau \\
    &= M\Bigl( \frac{t}{T} \Bigr)^{a_0}
      + Gt^{a_0} \Bigl( \frac{-1}{(a_0-\mu)T^{a_0-\mu}}
          + \frac{1}{(a_0-\mu)t^{a_0-\mu}}\Bigr) \\
    &\leq M\Bigl( \frac{t}{T} \Bigr)^{a_0}
          + Gt^{a_0} \frac{1}{(a_0-\mu)t^{a_0-\mu}}
     = M\Bigl( \frac{t}{T} \Bigr)^{a_0}
            + \frac{G}{(a_0-\mu)} t^{\mu}
\end{align*}
on ${\mathcal D}$. We can get the estimate of 
$|(\partial w/\partial x)(t,x)|$ in the same way. 
\end{proof}

%
%

\subsection{Proof of Theorem 4.1}\label{subsection4.2}
%

   Suppose (\ref{3.1}), (\ref{3.2}) and 
$0<\mu \leq {\rm Re}\lambda(0,0)$. We set 
$N=[{\rm Re}\lambda(0,0)/\mu]$, the integer part of 
${\rm Re}\lambda(0,0)/\mu$; then we have
$N\mu \leq {\rm Re}\lambda(0,0)<(N+1)\mu$.
\par
    If $N\mu<{\rm Re}\lambda(0,0)$, we set $d=\mu$. If 
$N\mu = {\rm Re}\lambda(0,0)$, we take $0<d<\mu$ so that
$Nd<{\rm Re}\lambda(0,0)<(N+1)d$. Then, in any case we have
$$
    0< d \leq \mu < 2d \quad \mbox{and} \quad
     Nd< {\rm Re}\lambda(0,0) <(N+1)d.
$$
After that, we take $0<T^*<T_0$, $0<R^*<R_0$ and $a_0<a_1$ so that
\[
    Nd <a_0 \leq {\rm Re}\lambda(t,x) \leq a_1 < (N+1)d 
   \quad \mbox{on $[0,T^*] \times D_{R^*}$}.
\]
\par
   Under this situation, let us look for a solution of (\ref{2.1})
of the form
\[
    u(t,x)= v(t,x)+w(t,x) \quad \mbox{with}
        \quad v(t,x)=\sum_{n=1}^N u_n(t,x).
\]
Actually, we define $u_n(t,x)$ ($n=1,\ldots,N$) and 
$w(t,x)$ by the following recurrence formulas:
\begin{equation}\label{4.4}
     Lu_1=a(t,x), 
\end{equation}
for $2 \leq n \leq N$
\begin{equation}\label{4.5}
    Lu_n = \sum_{2 \leq j+\alpha \leq n} \!\! a_{j,\alpha}(t,x)
           \sum_{|\vec{k}|+|\vec{m}|=n}
           u_{k_1}\cdots u_{k_j} 
           \frac{\partial u_{m_1}}{\partial x} \cdots
         \frac{\partial u_{m_{\alpha}}}{\partial x}
\end{equation}
(where $\vec{k}=(k_1,\ldots,k_j)$, $|\vec{k}|=k_1+\cdots+k_j$,
$\vec{m}=(m_1,\ldots,m_{\alpha})$ and 
$|\vec{m}|=m_1+\cdots+m_{\alpha}$), and
\begin{align}
    Lw = &R \Bigl(t,x, v+w, \frac{\partial v}{\partial x}
            + \frac{\partial w}{\partial x} \Bigr)
         - R \Bigl(t,x, v, \frac{\partial v}{\partial x} 
                  \Bigr) \label{4.6}\\
   &+ \sum_{j+\alpha \geq 2} \!\! a_{j,\alpha}(t,x)
           \sum_{|\vec{k}|+|\vec{m}| \geq N+1}
           u_{k_1}\cdots u_{k_j} 
           \frac{\partial u_{m_1}}{\partial x} \cdots
         \frac{\partial u_{m_{\alpha}}}{\partial x}. \notag
\end{align}

    Step 1: By applying Proposition \ref{Proposition4.2} 
(and Corollary \ref{Corollary4.3}) to (\ref{4.4}) we have a solution 
$u_1(t,x) \in X_1^{d}([0,T_1] \times D_{R_1})$ for some $0<T_1<T^*$ 
and $0<R_1<R^*$. Then, the right side of (\ref{4.5}) (with $n=2$) 
belongs to $X_1^{2d}([0,T_1] \times D_{R_1})$ and so in the case 
$2 \leq N$ we have a solution 
$u_2(t,x) \in X_1^{2d}([0,T_2] \times D_{R_2})$ for some $0<T_2<T_1$ 
and $0<R_2<R_1$. 
\par
   By repeating the same argument we have 
$0<T_N< \cdots<T_2<T_1$, $0<R_N< \cdots<R_2<R_1$ and
$u_n(t,x) \in X_1^{nd}([0,T_n] \times D_{R_n})$ ($1 \leq n \leq N$)
so that they satisfy the recurrence formulas (\ref{4.4}) and 
(\ref{4.5}) ($2 \leq n \leq N$).

    Step 2:  Now, let us solve (\ref{4.6}). We set
\[
   f(t,x)=\sum_{j+\alpha \geq 2} \!\! a_{j,\alpha}(t,x)
           \sum_{|\vec{k}|+|\vec{m}| \geq N+1}
           u_{k_1}\cdots u_{k_j} 
           \frac{\partial u_{m_1}}{\partial x} \cdots
         \frac{\partial u_{m_{\alpha}}}{\partial x}:
\]
we have the conditions
$f(t,x) \in X_0([0,T_N] \times D_{R_N})$ and 
\begin{equation}\label{4.7}
       |f(t,x)| \leq Ft^{(N+1)d} \quad  
       \mbox{on $[0,T_N] \times D_{R_N}$}
\end{equation}
for some $F>0$. Since $v(t,x)$ is a known function in 
$X_1^d([0,T_N] \times D_{R_N})$, we have the expression
\begin{align*}
   &R \Bigl(t,x, v+w, \frac{\partial v}{\partial x}
            + \frac{\partial w}{\partial x} \Bigr)
         - R \Bigl(t,x, v, \frac{\partial v}{\partial x} 
                 \Bigr) \\
   &=c_1(t,x)w + c_2(t,x)\frac{\partial w}{\partial x}
     + \sum_{j+\alpha \geq 2} c_{j,\alpha}(t,x) w^j
     \Bigl(\frac{\partial w}{\partial x} \Bigr)^{\alpha},
\end{align*}
where $c_i(t,x) \in X_0([0,T_N] \times D_{R_N})$
($i=1,2$), $c_{j,\alpha}(t,x) \in X_0([0,T_N] \times D_{R_N})$
($j+\alpha \geq 2$), and we have
\begin{equation}\label{4.8}
     |c_i(t,x)| \leq C_it^d \quad 
       \mbox{on $[0,T_N] \times D_{R_N}$}
\end{equation}
for some $C_i>0$ ($i=1,2$).
\par
   Hence, under the setting
\begin{align*}
   &\lambda_c(t,x)= \lambda(t,x)+c_1(t,x), \\
   &b_c(t,x)= b(t,x)+c_2(t,x), \\
   &L_c= t \frac{\partial}{\partial t}-\lambda_c(t,x) 
          -b_c(t,x)\frac{\partial}{\partial x}
\end{align*}
our equation (\ref{4.6}) is written in the form
\begin{equation}\label{4.9}
    L_cw = f(t,x)+ \sum_{j+\alpha \geq 2} c_{j,\alpha}(t,x) w^j
     \Bigl(\frac{\partial w}{\partial x} \Bigr)^{\alpha}.
\end{equation}
Since (\ref{4.8}) is valid, we have
\begin{align*}
   &{\rm Re}\lambda_c(0,0)={\rm Re}\lambda(0,0) < a_1, \\
   &|b_c(t,x)| \leq (B+C_2)(\mu(t)+t^d) \quad 
          \mbox{on $[0,T_N] \times D_{R_N}$}.
\end{align*}
Since (\ref{4.7}) is satisfied and since $(N+1)d> a_1$ holds, we 
can apply Theorem \ref{Theorem3.1} to (\ref{4.9}) and we obtain a 
solution $w(t,x) \in X_1^{((N+1)d)}$.

    Step 3:  Thus, by setting
\[
    u(t,x)= \sum_{n=1}^N u_n(t,x)+w(t,x)
\]
we have a solution $u(t,x)$ of (\ref{2.1}) belonging to $X_1^{d}$.
\par
   Now, let us recall the construction of $u_1(t,x)$: it is a solution 
of (\ref{4.4}). Since (\ref{3.1}) is supposed, by 
Corollary \ref{Corollary4.3} we have 
$u_1(t,x) \in X_1^{\mu}$ if $\mu<{\rm Re}\lambda(0,0)$ (resp. 
$u_1(t,x) \in X_1^{(\mu)}$ if $\mu={\rm Re}\lambda(0,0)$). Hence, we 
have $u(t,x) \in X_1^{\mu}$ if $\mu<{\rm Re}\lambda(0,0)$ (resp. 
$u(t,x) \in X_1^{(\mu)}$ if $\mu={\rm Re}\lambda(0,0)$). This 
completes the proof of Theorem \ref{Theorem4.1}.  \qed

%
%

\section{Reduction of the problem}
%
   By Theorem \ref{Theorem3.1} and Theorem \ref{Theorem4.1} 
we have the result (1) of Theorem \ref{Theorem2.3}.  Take any 
$u_0(t,x) \in {\mathcal S}((\ref{2.1}),X_1^{(\mu)})$ and fix it.
\par
   Now, let $u(t,x)$ be a solution of (\ref{2.1}). Set
\[
      U(t,x)=u(t,x)-u_0(t,x):
\]
then, equation (\ref{2.1}) is transformed to an equation
with respect to $U(t,x)$:
\begin{align}
     t \frac{\partial U}{\partial t} = &\lambda(t,x)U
          +b(t,x)\frac{\partial U}{\partial x} \label{5.1}\\
   &+ R \Bigl(t,x, u_0+U, \frac{\partial u_0}{\partial x}
            + \frac{\partial U}{\partial x} \Bigr)
         - R \Bigl(t,x, u_0, 
            \frac{\partial u_0}{\partial x}\Bigr). \notag
\end{align}
Hence, to show the results (2) and (3) of Theorem \ref{Theorem2.3} 
it is sufficient to prove the following theorem.

\begin{thm}\label{Theorem5.1}
    Suppose the conditions ${\rm A}_1) \sim {\rm A}_3)$, 
{\rm (\ref{1.3})}, and ${\rm c1)} \sim {\rm c3)}$.  Then, we have the 
following results.
\par
   {\rm (1)} For any $\psi(x) \in {\mathcal O}_0$ the equation 
{\rm (\ref{5.1})} has a unique solution $U(t,x) \in X^+_1$ 
satisfying
\[
     \lim_{t \to +0} \bigl(t^{-\lambda(0,x)}U(t,x) \bigr)
        = \psi(x) \quad \mbox{uniformly near $x=0$}.
\]
From now, we write this solution as $U(\psi)(t,x)$.
\par
   {\rm (2)} We denote by ${\mathcal S}((\ref{5.1}),X)$ 
(where $X={\mathscr X}_1$ or $X=X_1^+$) the set of all solutions of 
{\rm (\ref{5.1})} belonging to $X$.
We have
\[
    {\mathcal S}((\ref{5.1}),{\mathscr X}_1)
     ={\mathcal S}((\ref{5.1}),X_1^+)
     = \{U(\psi) \,;\, \psi(x) \in {\mathcal O}_0 \}.
\]
\end{thm}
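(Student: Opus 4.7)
The overall strategy exploits that, since $u_0\in X_1^{(\mu)}$ and $R$ has order $\geq 2$ in $(z_1,z_2)$, equation (\ref{5.1}) recasts as $L_c U = Q(t,x,U,\partial_x U)$, with $L_c=t\partial_t-\lambda_c(t,x)-b_c(t,x)\partial_x$ where $\lambda_c(0,x)=\lambda(0,x)$ (so ${\rm Re}\,\lambda_c(0,0)>0$ by (\ref{1.3})), $b_c$ satisfies c3) with enlarged constants, and $Q$ collects genuine degree-$\geq 2$ nonlinearities in $(U,\partial_x U)$ together with linear terms whose coefficients, built from $u_0,\partial_x u_0$, are $O(t^a)$ for every $a<\mu$. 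By Corollary \ref{Corollary4.3}, the inverse $L_c^{-1}$ with zero data at some $t=T$ preserves the weight $t^b$ for every $0<b<a_0:={\rm Re}\,\lambda_c(0,0)$; dually, integrating the formula of Proposition \ref{Proposition4.2} from $\tau=0$ instead of $\tau=T$ produces a ``causal'' inverse that preserves $t^b$ for every $b>a_0$.

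\textbf{Existence in part (1).} Fix a single-valued branch of $t^{\lambda(0,x)}$ on some $[0,T_1]\times D_{R_1}$ and set $U_0(t,x)=\psi(x)t^{\lambda(0,x)}$. Conditions c2), c3) give $L_c U_0(t,x)=t^{\lambda(0,x)}(O(\mu(t))+O(t^a))$ for any $a<\mu$. Seek $U(\psi)=U_0+V$ with $V$ solving
\[
V \;=\; L_c^{-1}\bigl[Q(t,x,U_0+V,\partial_x U_0+\partial_x V)-L_c U_0\bigr]
\]
in a weighted ball $\{V : |V|,|\partial_x V|\leq K t^{a_0+\eta}\}$ for small $\eta>0$ and large $K$; the linear-in-$V$ coefficients inside $Q$ are $o(1)$ in $t$, while the purely nonlinear contributions gain a factor $t^{a_0+\eta}$, so the map contracts. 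The fixed point gives $U(\psi)\in X_1^+$ with $V=o(t^{a_0})$, hence the prescribed limit.

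\textbf{Uniqueness in $X_1^+$ and characterization in part (2).} If $U_1,U_2\in X_1^+$ solve (\ref{5.1}) with the same $\psi$, set $W=U_1-U_2$. Then $L_c W=\widetilde A_1(t,x)W+\widetilde A_2(t,x)\partial_x W+(\text{quadratic in }W)$ with $|\widetilde A_i|=O(t^d)$, and $t^{-\lambda(0,x)}W\to 0$. Along every characteristic of $L_c$, the vanishing $t^{-\lambda}$-trace singles out the unique ODE solution obtained by integrating the source from $\tau=0$; one application of this causal inverse improves the rate of $W$ by $t^d$, and iterating gives $|W|=O(t^{a_0+kd})$ for every $k$, hence $W\equiv 0$. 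For the reverse inclusion $\mathcal{S}((\ref{5.1}),\mathscr{X}_1)\subset\{U(\psi):\psi\in\mathcal{O}_0\}$, take $U\in\mathcal{S}((\ref{5.1}),\mathscr{X}_1)$ and first promote $U$ to $X_1^+$: (\ref{2.2}) combined with Cauchy's inequality yields $|U|,|\partial_x U|\leq\eta\rho$ on $(0,\sigma_\eta)\times D_{\rho/2}$; plugging this into Proposition \ref{Proposition4.2}'s formula along a characteristic through $(t_0,x_0)$ with small $t_0$, and exploiting the decay factor $(t/t_0)^{a_0}$ alongside the quadratic self-improvement in $Q$, produces $|U|=O(t^d)$ for some $d>0$. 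A second characteristic argument then shows that $t^{-\lambda(0,x)}U(t,x)$ converges, as $t\to+0$, to a function $\psi(x)$ uniformly near $x=0$, holomorphic by Cauchy estimates applied to the family $\{t^{-\lambda(0,x)}U(t,\cdot)\}_{t>0}$; finally $U-U(\psi)$ satisfies the hypotheses of the preceding uniqueness step, forcing $U=U(\psi)$.

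\textbf{Main obstacle.} The most delicate step is promoting a $\mathscr{X}_1$-solution to $X_1^+$: condition (\ref{2.2}) provides only a rescaled $\rho^2$-bound that can deteriorate as $\sigma\to 0$, and converting it into a definite polynomial decay rate $t^d$ demands careful tracking of the integrating factor $(t/t_0)^{a_0}$ produced by $L_c$ against the quadratic self-interaction in $Q$. Example \ref{Example2.6} shows that such a bootstrap cannot hold for an arbitrary solution (the spurious $u=x^2/4$ is ruled out precisely by (\ref{2.2})), so the $\mathscr{X}_1$ condition must be exploited in an essential way.
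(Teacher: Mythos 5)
Your overall architecture (reduce to a perturbed linear operator $L_c$ with unchanged principal part $\lambda(0,x)$, impose the singular data through the weight $t^{\lambda(0,x)}$, and promote $\mathscr{X}_1$-solutions to $X_1^+$ by a characteristic/transport estimate) matches the paper's, but the existence step in part (1) has a genuine gap. You take the ansatz $U_0=\psi(x)t^{\lambda(0,x)}$ and seek the correction $V$ as a fixed point in the ball $\{|V|,|\partial_xV|\le Kt^{a_0+\eta}\}$. The residual of the ansatz is
\[
L_cU_0=t^{\lambda(0,x)}\Bigl[(\lambda(0,x)-\lambda_c(t,x))\psi-b_c(t,x)\bigl(\psi'+\psi\,\partial_x\lambda(0,x)\log t\bigr)\Bigr],
\]
which by c2), c3) is only $O\bigl(t^{\mathrm{Re}\lambda(0,x)}\mu_0(t)\bigr)$ for a weight function $\mu_0$; since $\mu_0(t)$ need not be $O(t^{\eta})$ for any $\eta>0$ (e.g.\ $\mu(t)=|\log t|^{-2}$ is admissible), this source is not $O(t^{a_0+\eta})$ and the map does not send your ball into itself. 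Worse, even granting a fixed point with $V=O(t^{a_0+\eta})$ for some small $\eta$, this does not give $t^{-\lambda(0,x)}V\to0$ uniformly, because $\mathrm{Re}\lambda(0,x)$ varies with $x$ and exceeds $a_0+\eta$ at points where the claimed bound says nothing. Repairing this forces you to work with the $x$-dependent weight $t^{\mathrm{Re}\lambda(0,x)}\varphi(t)$, i.e.\ essentially to perform the paper's substitution $w=t^{-\lambda(0,x)}v$, which turns $Lv=0$ into a regular Cauchy problem at $t=0$ with coefficient $(\lambda(t,x)-\lambda(0,x))+\partial_x\lambda(0,x)(\log t)b(t,x)$ bounded by a weight function (this is exactly where c3) is needed), solves it exactly along characteristics, and only then perturbs by a remainder lying in $X_1^{(2d)}$ with $2d>\mathrm{Re}\lambda(0,0)$.

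Two further points are under-argued. In the uniqueness step, ``$|W|=O(t^{a_0+kd})$ for every $k$, hence $W\equiv0$'' is a non sequitur unless you control the constants in $k$; the paper instead observes that $W$ satisfies an \emph{exactly} linear homogeneous equation $L_hW=0$ (the ``quadratic in $W$'' terms are absorbed into the coefficients $h_1,h_2$ since $W$ is a known function), classifies all its solutions as $V(\psi)$, and lets the vanishing trace force $\psi=0$. In the $\mathscr{X}_1\to X_1^+$ promotion you correctly identify the transport estimate $|u^*(t)|\le r_1(t/T)^a$ along characteristics of the perturbed field $b+b_1$, but you omit the covering step: one must show that the backward characteristics issued from $|\xi|<3R/4$ at $t=T$ fill a full neighborhood $(0,T]\times D_{R/4}$, which is where the $\rho^2$-smallness of (\ref{2.2}) enters (via the total drift bound $B\varphi(T)+(B_1r_1+B_2r_2)/a+B_2Lr_1/a^2<R/4$ and a successive-approximation solution of $\phi_1(t_0;T,\xi)=x_0$). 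Without this, the decay estimate holds only on the characteristic tube, not on a neighborhood of the origin.
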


    The rest part of this paper will be used to prove
Theorem \ref{Theorem5.1}. Take any $0<d<\mu$. Since $u_0(t,x)$ 
is a known function belonging to $X_1^d$, we have the expression
\begin{align*}
   &R \Bigl(t,x, u_0+U, \frac{\partial u_0}{\partial x}
            + \frac{\partial U}{\partial x} \Bigr)
         - R \Bigl(t,x, u_0, 
            \frac{\partial u_0}{\partial x}\Bigr) \\
   &=c_1(t,x)U + c_2(t,x)\frac{\partial U}{\partial x}
     + \sum_{j+\alpha \geq 2} c_{j,\alpha}(t,x) U^j
     \Bigl(\frac{\partial U}{\partial x} \Bigr)^{\alpha}
\end{align*}
where $c_i(t,x) \in X_0([0,T] \times D_{R})$
($i=1,2$), $c_{j,\alpha}(t,x) \in X_0([0,T] \times D_{R})$
($j+\alpha \geq 2$) for some $T>0$ and $R>0$, and we have
$|c_i(t,x)| \leq C_it^d$ on $[0,T] \times D_{R}$ for some $C_i>0$ 
($i=1,2$). Then, our equation (\ref{5.1}) is written in the form
\begin{equation}\label{5.2}
     t \frac{\partial U}{\partial t} = \lambda_c(t,x)U
          +b_c(t,x)\frac{\partial U}{\partial x}
       + \sum_{j+\alpha \geq 2} c_{j,\alpha}(t,x) U^j
     \Bigl(\frac{\partial U}{\partial x} \Bigr)^{\alpha},
\end{equation}
where
\begin{align*}
   &\lambda_c(t,x)= \lambda(t,x)+c_1(t,x), \\
   &b_c(t,x)= b(t,x)+c_2(t,x).
\end{align*}
Without loss of generality, we may asuume: $0<T<1/e$.
By (\ref{1.3}), c2) and c3) we have 
\begin{align*}
   &{\rm Re}\lambda_c(0,0)= {\rm Re}\lambda(0,0)>0, \\
   &|\lambda_c(t,x)-\lambda_c(0,x)|
     \leq |\lambda(t,x)-\lambda(0,x)|+|c_1(t,x)| \\
   &\hspace{40mm}\leq 
         \Lambda_1(\mu(t)+t^d) \leq \Lambda_1(\mu(t)+ |\log t|t^d), \\
   &|b_c(t,x)| \leq |b(t,x)|+|c_2(t,x)|
           \leq \frac{B_1(\mu(t)+ |\log t|t^d)}{|\log t|}
\end{align*}
on $[0,T] \times D_R$ for some $\Lambda_1>0$ and $B_1>0$.
Since $\mu_0(t)= \mu(t)+ |\log t|t^d$ is also a weight function
on $(0,T]$, the equation (\ref{5.2}) satisfies
similar conditions as (\ref{1.3}), c2) and c3): the only difference is 
that we have $a(t,x) \equiv 0$ in the case (\ref{5.2}).
\par
   Thus, for the sake of simplicity of notations, instead of (\ref{5.2})
we may treat (\ref{2.1}) under the condition $a(t,x) \equiv 0$.

\makeatletter
\@addtoreset{equation}{subsection} 
\def\theequation{\thesubsection.\arabic{equation}}
\makeatother

%
%

\section{Analysis in the case $a(t,x) \equiv 0$}\label{section6}
%

   In this section, we consider the equation (\ref{2.1}) in
the case $a(t,x) \equiv 0$; then our equation is
\begin{equation}\label{6.0.1}
   t \,\frac{\partial u}{\partial t}
   = \lambda(t,x)u+ b(t,x) \frac{\partial u}{\partial x}
        + R \Bigl( t,x,u, \frac{\partial u}{\partial x} \Bigr)
\end{equation}
which has a trivial solution $u(t,x) \equiv 0$.

%
%

\subsection{On a basic equation $Lv=0$}\label{subsection6.1}
%

   First, let
\[
     L= t \frac{\partial }{\partial t}
    - \lambda(t,x) - b(t,x) \frac{\partial }{\partial x}
\]
be as before, and let us consider the equation
\begin{equation}\label{6.1.1}
    Lv=0  
\end{equation}
in the case $\lambda(0,x) \equiv 0$ on $D_{R_0}$, 
under the conditions: 
$\lambda(t,x), b(t,x) \in X_0([0,T_0] \times D_{R_0})$ and
\begin{align}
   &|\lambda(t,x)| \leq \Lambda \mu(t) \quad 
              \mbox{on $[0,T_0] \times D_{R_0}$}, \label{6.1.2}\\
   &|b(t,x)| \leq B \mu(t) \quad 
              \mbox{on $[0,T_0] \times D_{R_0}$} \label{6.1.3}
\end{align}
for some $\Lambda>0$, $B>0$ and a weight function $\mu(t)$
on $(0,T_0]$. 
\par
    We take $0<T<T_0$, $0<R<R_0$, $0<r \leq 1/B$ and set
\[
     W=\{(t,x) \in [0,T] \times D_R \,;\,
           \varphi(t)/r+|x|<R \}.
\]
Let $\phi(t;t_0,x_0)$ be the unique solution of (\ref{4.2}) as before.
For $x_0 \in D_R$ we set
\[
    t_{x_0}= \sup\{t_1>0 \,;\, (t,\phi(t;0,x_0)) \in W \enskip
        \mbox{for any $0 \leq t<t_1$} \}.
\]

\begin{llem}\label{Lemma6.1.1}
   {\rm (1)} For any $(t_0,x_0) \in W$, 
we have $(t,\phi(t;t_0,x_0)) \in W$ for any $0 \leq t \leq t_0$.
\par
   {\rm (2)} We have
\[
    W = \bigcup_{x_0 \in D_R}
      \{(t, \phi(t;0,x_0)) \,;\, 0 \leq t<t_{x_0} \}.
\]
\end{llem}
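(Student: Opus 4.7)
My strategy is a bootstrap argument on the characteristic flow for part (1), with the condition $0<r\le 1/B$ providing just enough room to close the estimate, followed by a uniqueness argument for part (2). Integrating the defining ODE (\ref{4.2}) along $[t,t_0]$ gives the a priori bound
\begin{equation*}
  |\phi(t;t_0,x_0)-x_0|
  \leq B\int_t^{t_0}\frac{\mu(s)}{s}\,ds
  = B\bigl(\varphi(t_0)-\varphi(t)\bigr),
\end{equation*}
valid as long as the trajectory stays inside $D_{R_0}$ so that the bound $|b|\le B\mu(t)$ is available along it. I also note in passing that $\phi(t;0,x_0)$ itself is meaningful for $x_0\in D_R$: the singular integral equation $x(t)=x_0-\int_0^t b(s,x(s))/s\,ds$ admits a unique solution by Picard iteration, since $\mu(s)/s$ is integrable on $(0,T_0]$ by the definition of a weight function.

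For part (1), I fix $(t_0,x_0)\in W$ and let $t^\ast$ be the infimum of those $t\in[0,t_0]$ for which $\phi(s;t_0,x_0)\in D_{R_0}$ throughout $[t,t_0]$. On $[t^\ast,t_0]$ the estimate above, combined with the monotonicity $\varphi(t)\le\varphi(t_0)$ and the inequality $1/r-B\ge 0$, gives
\begin{equation*}
  \frac{\varphi(t)}{r}+|\phi(t;t_0,x_0)|
  \leq |x_0|+B\varphi(t_0)+\Bigl(\frac{1}{r}-B\Bigr)\varphi(t)
  \leq |x_0|+\frac{\varphi(t_0)}{r}<R.
\end{equation*}
In particular $|\phi(t;t_0,x_0)|<R\leq R_0$ strictly on $[t^\ast,t_0]$; if $t^\ast>0$, this would persist in a one-sided neighborhood of $t^\ast$ by continuity of $\phi(\cdot;t_0,x_0)$, contradicting the minimality. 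Hence $t^\ast=0$, and the displayed inequality proves $(t,\phi(t;t_0,x_0))\in W$ for every $t\in[0,t_0]$.

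For part (2), the inclusion $\supseteq$ is immediate from the definition of $t_{x_0}$. For $\subseteq$, given $(t_1,x_1)\in W$, part (1) supplies the backward characteristic $\phi(\cdot;t_1,x_1)$ with image in $W$ on $[0,t_1]$; uniqueness of solutions of (\ref{4.2}) (Gronwall combined with the Cauchy estimate $|\partial b/\partial x|\lesssim \mu(t)$) identifies this trajectory with $\phi(\cdot;0,x_0)$ for $x_0:=\phi(0;t_1,x_1)\in D_R$, so that $x_1=\phi(t_1;0,x_0)$. The strict inequality $\varphi(t_1)/r+|x_1|<R$ together with continuity of the characteristic then propagates slightly past $t_1$, which yields $t_1<t_{x_0}$ and completes the identification.

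The main delicacy is the bootstrap step in (1): one cannot invoke $|b|\le B\mu(t)$ along the trajectory until one knows it remains in $D_{R_0}$, and the numerical condition $Br\le 1$ is precisely what makes the estimate self-improving so that the bootstrap closes. Everything else is routine integration along characteristics plus ODE uniqueness.
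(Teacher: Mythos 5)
Your proof is correct and follows essentially the same route as the paper: the key step is the identical computation $\varphi(t)/r+|x(t)|\le \varphi(t_0)/r+|x_0|-(1/r-B)(\varphi(t_0)-\varphi(t))<R$ using $1/r\ge B$, with your bootstrap/continuation framing simply making explicit what the paper compresses into ``this shows $x(t)$ can be extended to $[0,t_0]$.'' Your elaboration of part (2), which the paper dismisses with ``the result (2) follows from (1),'' is the intended argument (backward characteristic plus ODE uniqueness plus propagation of the strict inequality).
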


\begin{proof}
   Set $x(t)=\phi(t;t_0,x_0)$: then we have
\[
     x(t)= x_0 + \int_t^{t_0} \frac{b(\tau,x(\tau))}{\tau}
         d\tau.
\]
If $x(t)$ exists on $(t_1,t_0]$, for any $t \in (t_1,t_0]$ we have
\[
    |x(t)| \leq |x_0|+B \int_t^{t_0} \frac{\mu(\tau)}{\tau}d\tau
    = |x_0|+ B(\varphi(t_0)-\varphi(t))
\]
and so by the condition $1/r \geq B$ we have
\begin{align*}
   \varphi(t)/r+|x(t)|
   &\leq \varphi(t)/r+|x_0|+ B(\varphi(t_0)-\varphi(t)) \\
   &= \varphi(t_0)/r+|x_0|-(1/r-B)(\varphi(t_0)-\varphi(t)) \\
   &\leq \varphi(t_0)/r+|x_0| <R.
\end{align*}
This shows that $x(t)$ can be extended to $[0,t_0]$. This
proves (1). The result (2) follows from (1). 
\end{proof}

   As to (\ref{6.1.1}) we have

\begin{llem}\label{Lemma6.1.2}
    Suppose the conditions {\rm (\ref{6.1.2})} and {\rm (\ref{6.1.3})}. 
Then, we have the following results.
\par
   {\rm (1)} For any holomorphic function $\psi(x)$ on $D_R$ the 
equation {\rm (\ref{6.1.1})} has a unique solution $v(t,x) \in X_1(W)$ 
satisfying $v(0,x)=\psi(x)$ on $D_R$, and it is given by
\begin{equation}\label{6.1.4}
      v(t,x)= \psi(\phi(0;t,x))
          \exp \Bigl[ \int_0^t
     \frac{\lambda(s,\phi(s;t,x))}{s}ds \Bigr].
\end{equation}
We write this solution as $V_0(\psi)(t,x)$.
\par
   {\rm (2)} If $v(t,x) \in X_1((0,T_1] \times D_{R_1})$ is a solution 
of {\rm (\ref{6.1.1})} we have $v(t,x) \in X_1([0,T] \times D_{R})$ 
for some $T>0$ and $R>0$. By setting $\psi(x)=v(0,x)$ we have 
$v=V_0(\psi)$ on $[0,\epsilon] \times D_{\delta}$ for
some $\psi(x) \in {\mathcal O}_0$, $\epsilon>0$ and $\delta>0$.
\end{llem}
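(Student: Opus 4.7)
The plan is to solve (6.1.1) by the method of characteristics, reducing the PDE to a scalar linear Euler-type ODE along each curve $s\mapsto\phi(s;t,x)$ defined by (4.2).

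For part (1), I would first verify that the formula (6.1.4) is well defined on $W$: by Lemma \ref{Lemma6.1.1} (1), $\phi(s;t,x)\in D_R$ for $0\le s\le t$ whenever $(t,x)\in W$, so the integrand makes sense, and by (6.1.2) together with the weight-function hypothesis $\int_0^{T_0}\mu(\tau)/\tau\,d\tau<\infty$, the integral $\int_0^t\lambda(s,\phi(s;t,x))/s\,ds$ converges absolutely and uniformly in $(t,x)\in W$. Standard dependence theory for ODEs gives that $\phi(s;t,x)$ is continuous in $(s,t,x)$ and holomorphic in $x$ for fixed $(s,t)$, whence (6.1.4) defines a function continuous on $W$ and holomorphic in $x$ for fixed $t$. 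Differentiating $v^*(t):=v(t,\phi(t;0,x_0))$ along the characteristic starting at $(0,x_0)$ and using $\phi'=-b/t$ verifies $Lv=0$ on $W\cap\{t>0\}$; consequently $t\,\partial v/\partial t=\lambda v+b\,\partial v/\partial x$ extends continuously up to $t=0$, which gives $v\in X_1(W)$. For uniqueness, any $X_1(W)$-solution with $v(0,x)=\psi(x)$ yields $v^*(t)=v(t,\phi(t;0,x_0))$ satisfying $t\,dv^*/dt=\lambda(t,\phi(t;0,x_0))\,v^*$ with $v^*(0)=\psi(x_0)$, which forces (6.1.4).

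For part (2), the key is to extend a given $X_1((0,T_1]\times D_{R_1})$-solution back to $t=0$. I would choose $T>0$ and $R>0$ small enough that $\phi(s;t,x)\in D_{R_1}$ for all $0\le s\le t\le T$ and $|x|<R$; this is possible by the estimate used in the proof of Lemma \ref{Lemma6.1.1} from (6.1.3). Along each characteristic, $v^*(s):=v(s,\phi(s;t,x))$ satisfies $s\,dv^*/ds=\lambda(s,\phi(s;t,x))\,v^*$ for $0<s\le t$, hence
\[
   v^*(s_0)=v^*(t)\exp\Bigl[-\int_{s_0}^{t}\frac{\lambda(\tau,\phi(\tau;t,x))}{\tau}\,d\tau\Bigr]\quad\mbox{for $0<s_0<t$.}
\]
Since the integral on the right has a finite limit as $s_0\to 0^+$ by (6.1.2) and the weight condition, the limit $\psi(x_0):=\lim_{s\to 0^+}v(s,\phi(s;0,x_0))$ exists for $x_0=\phi(0;t,x)$; convergence is uniform on compact sets of $x_0$, so $\psi\in\mathcal{O}_0$ and $v$ extends continuously to $[0,T]\times D_R$ with $v(0,x)=\psi(x)$. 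The uniqueness from part (1) then yields $v=V_0(\psi)$ on this neighborhood of $(0,0)$.

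The main obstacle is the behavior as $t\to 0^+$: because $\lambda$ is only continuous in $t$, one cannot differentiate (6.1.4) directly at $t=0$, and for part (2) one cannot a priori assume that $v$ has any limit along the vertical direction. Both issues are resolved by the same mechanism, namely the absolute convergence of $\int_0^t\lambda(\tau,\cdot)/\tau\,d\tau$ forced by the weight-function bound (6.1.2), which upgrades the characteristic ODE analysis from $(0,t]$ to $[0,t]$.
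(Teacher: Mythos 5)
Your proposal is correct and follows essentially the same route as the paper: reduce (6.1.1) to the Euler-type ODE $t\,dv^*/dt=\lambda^* v^*$ along the characteristics of (4.2), integrate to get (6.1.4) and uniqueness, and in part (2) use the backward representation from $t=T$ together with the convergence of $\int_0^t\lambda/\tau\,d\tau$ guaranteed by (6.1.2) and the weight condition to extract the uniform limit $\psi(x)=v(0,x)$. The only cosmetic difference is that the paper obtains the backward representation by citing Proposition 4.2, whereas you re-derive it directly from the characteristic ODE.
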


\begin{proof}
    First, let us show (1). Let $v(t,x) \in X_1(W)$ be 
a solution of (\ref{6.1.1}) satisfying $v(0,x)=\psi(x)$ on $D_R$. 
Take any $x_0 \in D_R$.  Set
\begin{align*}
   &v^*(t)=v(t,\phi(t;0,x_0)), \\
   &\lambda^*(t)= \lambda(t,\phi(t;0,x_0)).
\end{align*}
Then, our equation (\ref{6.1.1}) is written in the form
\[
     t \frac{dv^*}{dt} - \lambda^*(t)v^*= 0, \quad 
          v^*(0)= \psi(x_0).
\]
By integrating this, we have
\[
      v^*(t)= \psi(x_0)
          \exp \Bigl[ \int_0^t
     \frac{\lambda^*(s)}{s}ds \Bigr],
\]
that is, 
\[
      v(t,\phi(t;0,x_0))= \psi(x_0)
          \exp \Bigl[ \int_0^t
     \frac{\lambda(s,\phi(s;0,x_0))}{s}ds \Bigr].
\]
\par
   Take any $(t,x) \in W$; by Lemma \ref{Lemma6.1.1} we have a unique 
$x_0 \in D_R$ such that $x=\phi(t;0,x_0)$.
In this case, we have $x_0=\phi(0;t,x)$ and 
$\phi(s;0,x_0)=\phi(s;t,x)$ for $0 \leq s \leq t$. Therefore,
by replacing $x_0$ and $\phi(s;0,x_0)$ by $\phi(0;t,x)$ and 
$\phi(s;t,x)$, respectively, in the above formula we finally obtain
\[
      v(t,x)= \psi(\phi(0;t,x))
          \exp \Bigl[ \int_0^t
     \frac{\lambda(s,\phi(s;t,x))}{s}ds \Bigr].
\]
This proves the formula (\ref{6.1.4}). This shows also the uniqueness
of the solution.
\par
   To show the existence of a solution, it is enough to see that
$v(t,x)$ defined by (\ref{6.1.4}) is a desired solution. Since 
(\ref{6.1.2}) is supposed, we have the well-definedness of $v(t,x)$ 
as a function in $X_1(W)$ and $v(0,x)=\psi(x)$ on $D_R$. 
Thus, the result (1) is proved.
\par
   Next, let us show (2). Let $v(t,x) \in X_1((0,T_1] \times D_{R_1})$ 
be a solution of (\ref{6.1.1}).  Take $0<T<T_1$ and $0<R <R_2<R_1$ so 
that
\[
    R+B\varphi(T)< R_2, \quad R_2+B\varphi(T) <R_1.
\]
Set 
\[
    {\mathcal D}= \bigcup_{x_0 \in D_{R_2}}
      \{(t, \phi(t;T,x_0)) \,;\, 0<t \leq T \}.
\]
It is easy to see that 
$
     (0,T] \times D_R \subset {\mathcal D}
           \subset (0,T] \times D_{R_1}
$
and that ${\mathcal D} \cap \{t=t_0 \}$ is an open subset of $\BC$ 
for any $t_0 \in (0,T]$.  By Proposition \ref{Proposition4.2}
we have the expression
\[
   v(t,x)= v(T,(\phi(T;t,x)) \exp \Bigl[ -\int_{t}^{T}
     \frac{\lambda(s,\phi(s;t,x))}{s}ds \Bigr]
        \quad \mbox{on ${\mathcal D}$}.
\]
Since (\ref{6.1.2}) is supposed, by using this expression we can 
see that
\[
   \sup_{x \in D_R}|v(t_1,x)-v(t_2,x)|
           \longrightarrow 0 \quad 
                 \mbox{(as $t_1,t_2 \longrightarrow +0$)}.
\]
This concludes that $v(t,x)$ converges to a holomorphic 
function $\psi(x)$ (as $t \longrightarrow +0$) uniformly
on $D_R$. Hence, by setting $v(0,x)=\psi(x)$ we have the 
condition 
$v(t,x) \in X_1([0,T] \times D_R)$. This proves the former 
half of (2). The latter half of (2) follows from the uniqueness
of the solution in (1).
\end{proof}

%
%

\subsection{Singular Cauchy problem for (\ref{6.1.1})}
                 \label{subsection6.2}
%

   Let us consider the same equation (\ref{6.1.1}) in the case 
$\lambda(0,x) \not\equiv 0$ on $D_{R_0}$. In this case, instead of 
the Cauchy data $v(0,x)=\psi(x)$ we impose the following singular 
Cauchy data:
\[
     \lim_{t \to +0} t^{-\lambda(0,x)}v(t,x)=\psi(x).
\]
To do so, instead of (\ref{6.1.2}) and (\ref{6.1.3}) we 
suppose: $0<T_0<1/e$, 
\begin{align}
   &|\lambda(t,x)-\lambda(0,x)| \leq \Lambda \mu(t) \quad 
              \mbox{on $[0,T_0] \times D_{R_0}$}, \label{6.2.1}\\
   &|b(t,x)| \leq \frac{B \mu(t)}{|\log t|} \quad 
              \mbox{on $[0,T_0] \times D_{R_0}$} \label{6.2.2}
\end{align}
for some $\Lambda>0$, $B>0$ and a weight function $\mu(t)$
on $(0,T_0]$. We note that (\ref{6.2.2}) implies (\ref{6.1.3}).
\par
   For $T>0$, $R>0$ and $r>0$ we set
$W=\{(t,x) \in [0,T] \times D_R \,;\,
           \varphi(t)/r+|x|<R \}$, as before.

\begin{llem}\label{Lemma6.2.1}
    Suppose {\rm (\ref{6.2.1})}, {\rm (\ref{6.2.2})} and
${\rm Re}\lambda(0,0)>0$. Then, we have the following results.
\par
   {\rm (1)} There are $T>0$, $R>0$ and $r>0$ which satisfy the 
following: for any holomorphic function $\psi(x)$ on $D_R$ the 
equation {\rm (\ref{6.1.1})} has a unique solution $v(t,x) \in X_1(W)$ 
satisfying
\[
      t^{-\lambda(0,x)}v(t,x) \longrightarrow \psi(x)
     \quad \mbox{{\rm (}as $t \longrightarrow +0${\rm )}}
\]
uniformly on $D_{R'}$ for any $0<R'<R$. In addition, we have
$v(t,x) \in X_1^d$ for any $0<d<{\rm Re}\lambda(0,0)$.
We write this solution as $V(\psi)(t,x)$.
\par
   {\rm (2)} If $v(t,x) \in X_1((0,T_1] \times D_{R_1})$ is a 
solution of {\rm (\ref{6.1.1})}, we have $v=V(\psi)$ on 
$(0,\epsilon] \times D_{\delta}$ 
for some $\psi(x) \in {\mathcal O}_0$, $\epsilon>0$ and $\delta>0$.
\end{llem}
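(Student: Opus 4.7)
The plan is to reduce Lemma 6.2.1 to Lemma 6.1.2 by the substitution $v(t,x) = t^{\lambda(0,x)} w(t,x)$, which strips off the singular prefactor and converts the problem with singular Cauchy data into one with ordinary Cauchy data $w(0,x) = \psi(x)$. A direct computation gives
\[
   Lv = t^{\lambda(0,x)}\bigl[\,t\partial_t w - \tilde\lambda(t,x)\, w - b(t,x)\,\partial_x w\,\bigr],
\]
where $\tilde\lambda(t,x) := (\lambda(t,x)-\lambda(0,x)) + b(t,x)\,\lambda_x(0,x)\log t$ and $\lambda_x(0,x) := (\partial\lambda/\partial x)(0,x)$. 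Hence $Lv=0$ is equivalent to $\tilde L w = 0$ for $\tilde L := t\partial_t - \tilde\lambda(t,x) - b(t,x)\partial_x$, and crucially $\tilde\lambda(0,x) \equiv 0$.

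The key verification is that $\tilde L$ fits the framework of Lemma 6.1.2, i.e.\ that $\tilde\lambda$ satisfies (6.1.2) on a smaller domain. By (6.2.1), $|\lambda(t,x)-\lambda(0,x)| \le \Lambda\mu(t)$, and by (6.2.2),
\[
   |b(t,x)\,\lambda_x(0,x)\log t| \le \frac{B\mu(t)}{|\log t|}\cdot M_0\cdot|\log t| = B M_0\,\mu(t),
\]
where $M_0 := \sup_{|x|\le R_0'}|\lambda_x(0,x)|$ on some smaller disk $D_{R_0'}\subset D_{R_0}$. Thus $|\tilde\lambda(t,x)|\le \tilde\Lambda\mu(t)$, and $b$ still satisfies (6.1.3). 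This balancing is where the sharpened form of (6.2.2), with $|\log t|$ in the denominator, is essential: it precisely absorbs the $\log t$ produced by $\partial_x t^{\lambda(0,x)}$.

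Applying Lemma 6.1.2(1) to $\tilde L w = 0$ yields, for each $\psi\in\mathcal O_0$, a unique $w\in X_1(W)$ with $w(0,x)=\psi(x)$ on $D_R$; set $V(\psi)(t,x):=t^{\lambda(0,x)}w(t,x)$. The asymptotic $t^{-\lambda(0,x)}V(\psi) = w \to \psi$ uniformly on each $D_{R'}$ is immediate. For $V(\psi)\in X_1^d$ with $0<d<{\rm Re}\lambda(0,0)$, choose $d<d'<{\rm Re}\lambda(0,0)$ and shrink $R$ so that ${\rm Re}\lambda(0,x)\ge d'$; then $|V(\psi)|\le C t^{d'}$, and since $\partial_x V(\psi) = t^{\lambda(0,x)}[\lambda_x(0,x)(\log t)w + \partial_x w]$, the factor $t^{d'}|\log t|$ is dominated by $C't^d$ for small $t$. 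Uniqueness in (1) follows because any two solutions $v_1,v_2\in X_1(W)$ with the prescribed asymptotic give $w_i := t^{-\lambda(0,x)}v_i\in X_1(W)$ with $w_i(0,x)=\psi(x)$, so $w_1=w_2$ by Lemma 6.1.2(1).

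For part (2), given a solution $v\in X_1((0,T_1]\times D_{R_1})$, define $w:=t^{-\lambda(0,x)}v$; this lies in $X_1((0,T_1]\times D_{R_1'})$ for some $R_1'<R_1$ and satisfies $\tilde L w=0$. By Lemma 6.1.2(2), $w$ extends to an element of $X_1([0,T]\times D_R)$ and coincides with $V_0(\psi)$ on some $[0,\epsilon]\times D_\delta$ for $\psi(x):=w(0,x)\in\mathcal O_0$, hence $v=t^{\lambda(0,x)}V_0(\psi)=V(\psi)$ there. The main obstacle is the step bounding $\tilde\lambda$: everything hinges on the exact cancellation between the $\log t$ generated by differentiating $t^{\lambda(0,x)}$ in $x$ and the $1/|\log t|$ in the hypothesis on $b$, which is precisely the reason (6.2.2) is stated with a stronger bound than (6.1.3).
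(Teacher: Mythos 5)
Your proposal is correct and is essentially identical to the paper's proof: the substitution $v=t^{\lambda(0,x)}w$ produces exactly the paper's coefficient $\lambda_b(t,x)=(\lambda(t,x)-\lambda(0,x))+\lambda_x(0,x)(\log t)\,b(t,x)$, the $1/|\log t|$ in (6.2.2) absorbs the logarithm so that Lemma 6.1.2 applies, and both parts (1) and (2) are then read off from Lemma 6.1.2. Your explicit verification of the $X_1^d$ membership (shrinking $R$ so that ${\rm Re}\,\lambda(0,x)\ge d'>d$ and absorbing the $|\log t|$ from $\partial_x t^{\lambda(0,x)}$) is a detail the paper leaves implicit, but it is the intended argument.
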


\begin{proof}
   We set $w(t,x)=t^{-\lambda(0,x)}v(t,x)$. Then, (\ref{6.1.1}) is 
reduced to an equation with respecto to $w(t,x)$:
\begin{equation}\label{6.2.3}
     t \frac{\partial w}{\partial t}
       = \lambda_b(t,x)w + b(t,x) \frac{\partial w}{\partial x}
\end{equation}
where
\[
   \lambda_b(t,x)= (\lambda(t,x)-\lambda(0,x))
         + \frac{\partial \lambda(0,x)}{\partial x}
         (\log t)b(t,x).
\]
Since $0<T_0<1/e$, (\ref{6.2.1}) and (\ref{6.2.2}) are supposed, 
we have
\begin{align*}
   &|\lambda_b(t,x)| \leq \Lambda \mu(t) + CB\mu(t)
                \quad \mbox{on $[0,T_0] \times D_{R_1}$}, \\
   &|b(t,x)| \leq B\mu(t)/|\log t| \leq M\mu(t)
                      \quad \mbox{on $[0,T_0] \times D_{R_1}$}.
\end{align*}
for some $C>0$ and $0<R_1<R_0$. Hence, we can apply 
Lemma \ref{Lemma6.1.2} to (\ref{6.2.3}). This leads us to 
Lemma \ref{Lemma6.2.1}.
\end{proof}

%
%

\subsection{Singular Cauchy problem for (\ref{6.0.1})}
           \label{subsection6.3}
%

   Let us consider the singular Cauchy problem for (\ref{6.0.1}). 
We have

\begin{tthm}\label{Theorem6.3.1}
    Suppose {\rm (\ref{6.2.1})}, {\rm (\ref{6.2.2})} and
${\rm Re}\lambda(0,0)>0$. Then, for any 
$\psi(x) \in {\mathcal O}_0$ the equation {\rm (\ref{6.0.1})} has
a unique solution $u(t,x) \in X^+_1$ satiasfying
\begin{equation}\label{6.3.1}
      t^{-\lambda(0,x)}u(t,x) \longrightarrow \psi(x)
     \quad \mbox{{\rm (}as $t \longrightarrow +0${\rm )}}
\end{equation}
uniformly on $D_{R}$ for some $R>0$. We write this solution
as $U(\psi)(t,x)$.
\end{tthm}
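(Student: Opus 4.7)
My plan is to reduce Theorem 6.3.1 to Lemma 6.2.1 (linearized singular Cauchy problem) and Theorem 3.1 (nonlinear correction with vanishing singular data). Fix $d$ with $\tfrac{1}{2}{\rm Re}\lambda(0,0)<d<{\rm Re}\lambda(0,0)$ and let $v_0=V(\psi)$ be the solution from Lemma 6.2.1, so that $v_0\in X_1^d$ and $t^{-\lambda(0,x)}v_0\to\psi$ uniformly near $x=0$. Seek $u=v_0+w$; using $Lv_0=0$, the equation for $w$ becomes
\[
 Lw = R\Bigl(t,x,\,v_0+w,\,\frac{\partial v_0}{\partial x}+\frac{\partial w}{\partial x}\Bigr).
\]
Taylor-expanding the right-hand side in $(w,\partial w/\partial x)$ around $(v_0,\partial v_0/\partial x)$, and using that $R$ has no constant or linear part in $(z_1,z_2)$, this takes the form (2.1):
\[
 t\frac{\partial w}{\partial t}=a_c(t,x)+\lambda_c(t,x)\,w+b_c(t,x)\frac{\partial w}{\partial x}+\tilde R\Bigl(t,x,w,\frac{\partial w}{\partial x}\Bigr),
\]
with $a_c(t,x)=R(t,x,v_0,\partial v_0/\partial x)$, $\lambda_c=\lambda+\partial_{z_1}R(t,x,v_0,\partial v_0/\partial x)$, $b_c=b+\partial_{z_2}R(t,x,v_0,\partial v_0/\partial x)$, and $\tilde R$ starting at order $2$ in $(w,\partial w/\partial x)$. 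Since $v_0$ and $\partial v_0/\partial x$ are $O(t^d)$, one obtains $|a_c|\leq Ct^{2d}$, $\lambda_c(0,x)=\lambda(0,x)$, and $|b_c|\leq B'(\mu(t)/|\log t|+t^d)$; these estimates fit the hypotheses of Theorem 3.1 with exponent $2d$ and weight function $\mu'(t)=\mu(t)/|\log t|+t^d$, together with $0<{\rm Re}\lambda_c(0,0)={\rm Re}\lambda(0,0)<2d$. Theorem 3.1 then produces a unique $w\in X_1^{(2d)}$, and since $2d>{\rm Re}\lambda(0,0)$ this forces $t^{-\lambda(0,x)}w\to 0$, so $u=v_0+w\in X_1^+$ is the desired solution satisfying (6.3.1).

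For uniqueness, given two solutions $u_1,u_2\in X_1^+$ sharing the singular data $\psi$, set $U=u_1-u_2$. Writing the difference $R(t,x,u_1,\partial u_1/\partial x)-R(t,x,u_2,\partial u_2/\partial x)$ in mean-value form as $\tilde C_1(t,x)\,U+\tilde C_2(t,x)\,\partial U/\partial x$ with
\[
 \tilde C_i(t,x)=\int_0^1\partial_{z_i}R\bigl(t,x,\,su_1+(1-s)u_2,\,s\partial_x u_1+(1-s)\partial_x u_2\bigr)\,ds,
\]
the function $U$ satisfies the \emph{linear} equation $\tilde LU=0$ with $\tilde L=t\,\partial/\partial t-(\lambda+\tilde C_1)-(b+\tilde C_2)\,\partial/\partial x$. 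Because $u_i\in X_1^+$ vanish at $t=0$ and $R$ has no constant or linear part in $(z_1,z_2)$, one has $\tilde C_i(0,x)\equiv 0$, so $(\lambda+\tilde C_1)(0,x)=\lambda(0,x)$, and the coefficients of $\tilde L$ readily satisfy the hypotheses of Lemma 6.2.1 with a suitably enlarged weight function. Lemma 6.2.1 then yields $U=V_{\tilde L}(\tilde\psi)$ near the origin for some $\tilde\psi\in{\mathcal O}_0$ with $t^{-\lambda(0,x)}U\to\tilde\psi$; combined with the hypothesis $t^{-\lambda(0,x)}U\to 0$, this forces $\tilde\psi\equiv 0$, hence $U\equiv 0$.

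The main technical hurdle I anticipate is the weight-function bookkeeping: the $O(t^d)$ perturbations introduced by the substitution $u=v_0+w$, and the $O(t^{d_{12}})$ perturbations from the mean-value linearization in the uniqueness step, must each be absorbed into an enlarged weight function (for example $\mu(t)/|\log t|+t^d$ for the application of Theorem 3.1, and $\mu(t)+t^{d_{12}}|\log t|$ for the application of Lemma 6.2.1), and each enlarged function must be verified to be a genuine weight function in the sense of the paper. The one genuinely conceptual choice is to pick $d$ in the narrow window $\bigl(\tfrac{1}{2}{\rm Re}\lambda(0,0),\,{\rm Re}\lambda(0,0)\bigr)$, which simultaneously guarantees $v_0\in X_1^d$ and $2d>{\rm Re}\lambda(0,0)$; once this is in place, everything reduces to direct invocations of Theorem 3.1 and Lemma 6.2.1.
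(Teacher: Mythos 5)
Your proposal is correct and follows essentially the same route as the paper: decompose $u=v_0+w$ with $v_0=V(\psi)$ from Lemma \ref{Lemma6.2.1} in $X_1^d$ for $d$ chosen so that ${\rm Re}\lambda(0,0)<2d$, solve the $w$-equation via Theorem \ref{Theorem3.1} with forcing term $O(t^{2d})$ to get $w\in X_1^{(2d)}$, and prove uniqueness by linearizing the difference of two solutions and invoking the uniqueness part of Lemma \ref{Lemma6.2.1} with an enlarged weight function such as $\mu(t)+|\log t|\,t^d$. The weight-function bookkeeping you flag is exactly the point the paper handles the same way, so no gap remains.
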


\begin{proof}
    We take $0<T_1<T_0$, $0<R_1<R_0$, $d>0$, $a_1>a_0>0$ so that
\begin{equation}\label{6.3.2}
    0<d < a_0 \leq {\rm Re}\lambda(t,x) \leq a_1<2d
     \quad \mbox{on $[0,T_1] \times D_{R_1}$}.
\end{equation}

    Step 1:  Let us look for a solution $u(t,x)$ in the form
\[
     u(t,x)=v(t,x)+w(t,x)
\]
where $v(t,x)$ and $w(t,x)$ are defined by the recurrence formulas:
\begin{align}
   &Lv = 0, \label{6.3.3} \\
   &Lw = R \Bigl( t,x,v+w, \frac{\partial v}{\partial x}
              + \frac{\partial w}{\partial x} \Bigr).
             \label{6.3.4}
\end{align}
By Lemma \ref{Lemma6.2.1} we have a solution 
$v(t,x) \in X_1^d([0,T_2] \times D_{R_2})$ of (\ref{6.3.3}) (for some 
$0<T_2<T_1$ and $0<R_2<R_1$) satisfying 
\[
       t^{-\lambda(0,x)}v(t,x) \longrightarrow \psi(x) \quad
    \mbox{{\rm (}as $t \longrightarrow +0${\rm )}}
\]
uniformly on $D_{R_2}$. Then, $v(t,x)$ is a known function, 
and under the setting
\[
     f(t,x)= R \Bigl( t,x,v, \frac{\partial v}{\partial x}\Bigr)
\]
our equation (\ref{6.3.4}) is written in the form
\begin{equation}\label{6.3.5}
   Lw=f(t,x)+ R \Bigl( t,x,v+w, \frac{\partial v}{\partial x}
              + \frac{\partial w}{\partial x} \Bigr)
        - R \Bigl( t,x,v, \frac{\partial v}{\partial x} \Bigr).
\end{equation}
Since $v(t,x) \in X_1^d([0,T_2] \times D_{R_2})$ is valid, 
we have
\[
    |f(t,x)| \leq F t^{2d} \quad 
            \mbox{on $[0,T_2] \times D_{R_2}$}
\]
for some $F>0$. Since $2d>a_1$ is supposed in (\ref{6.3.2}), by
the same argument as in the proof of Theorem \ref{Theorem4.1} (in 
Step 2 in subsection \ref{subsection4.2}) we have a solution 
$w(t,x) \in X_1^{(2d)}$ of (\ref{6.3.5}). Since $2d>a_1$ holds, 
we have 
\[
       t^{-\lambda(0,x)}w(t,x) \longrightarrow 0 \quad
    \mbox{{\rm (}as $t \longrightarrow +0${\rm )}}.
\]
Thus, by setting $u(t,x)=v(t,x)+w(t,x)$ we have a solution 
of (\ref{6.0.1}) satisfying (\ref{6.3.1}). This proves the existence
part of Theorem \ref{Theorem6.3.1}.

    Step 2:  Next, let us show the uniqueness of the solution.
Let $u_i(t,x) \in X_1^+$ ($i=1,2$) be two solutions of (\ref{6.0.1})
satisfying (\ref{6.3.1}). By (\ref{6.3.1}) and (\ref{6.3.2}) we have 
$u_i(t,x) \in X_1^d$ ($i=1,2$). Set 
$w(t,x)=u_1(t,x)-u_2(t,x) \in X_1^d$. Then, we have
\[
     Lw= R \Bigl( t,x,u_2+w, \frac{\partial u_2}{\partial x}
              + \frac{\partial w}{\partial x} \Bigr)
        - R \Bigl( t,x,u_2, \frac{\partial u_2}{\partial x} \Bigr)
\]
and $t^{-\lambda(0,x)}w(t,x) \longrightarrow 0$ 
(as $t \longrightarrow +0$) uniformly near $x=0$.
Since $u_2(t,x)$ and $w(t,x)$ are known function, we have the
expression
\[
     R\Bigl( t,x,u_2+w, \frac{\partial u_2}{\partial x}
              + \frac{\partial w}{\partial x} \Bigr)
        - R \Bigl( t,x,u_2, \frac{\partial u_2}{\partial x} \Bigr)
    =h_1(t,x)w + h_2(t,x) \frac{\partial w}{\partial x}
\]
for some $h_i(t,x) \in X_0([0,T] \times D_R)$ (with $T>0$,
$R>0$), and we have
\[
    |h_i(t,x)| \leq H_i t^d \quad
    \mbox{on $[0,T] \times D_R$}
\]
for some $H_i>0$ ($i=1,2$). Then, under the setting
\[
   L_h= t \frac{\partial}{\partial t}
     -(\lambda(t,x)+h_1(t,x))w-(b(t,x)+h_2(t,x))
        \frac{\partial }{\partial x}
\]
we have
\begin{equation}\label{6.3.6}
    L_hw=0, \quad t^{-\lambda(0,x)}w(t,x) \longrightarrow 0
     \quad \mbox{{\rm (}as $t \longrightarrow +0${\rm )}}.
\end{equation}
By applying the uniqueness part of Lemma \ref{Lemma6.2.1} 
(with $L$ and $\mu(t)$ replaced by $L_h$ and 
$\mu_0(t)=\mu(t)+|\log t|t^d$) to (\ref{6.3.6}) we have $w(t,x)=0$ 
on $[0,\epsilon] \times D_{\delta}$ for some $\epsilon>0$ and 
$\delta>0$. Hence, we have $u_1(t,x)=u_2(t,x)$ on 
$[0,\epsilon] \times D_{\delta}$. 
\par
   This proves the uniqueness of the solution.
\end{proof}

%
%

\subsection{On ${\mathcal S}((\ref{6.0.1}),X_1^+)$}
           \label{subsection6.4}
%

   We denote by ${\mathcal S}((\ref{6.0.1}),X_1^+)$ the set of all 
solutions of (\ref{6.0.1}) belonging to the class $X_1^+$.
By Theorem \ref{Theorem6.3.1} we have
\[
    {\mathcal S}((\ref{6.0.1}),X_1^+)
    \supset \{U(\psi) \,;\, \psi(x) \in {\mathcal O}_0 \}.
\]
Let us show

\begin{tthm}\label{Theorem6.4.1}
    Suppose {\rm (\ref{6.2.1})}, {\rm (\ref{6.2.2})} and 
${\rm Re}\lambda(0,0)>0$.  Then, we have
\begin{equation}\label{6.4.1}
    {\mathcal S}((\ref{6.0.1}),X_1^+)
    = \{U(\psi) \,;\, \psi(x) \in {\mathcal O}_0 \}.
\end{equation}
\end{tthm}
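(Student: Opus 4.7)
The inclusion $\{U(\psi) \,;\, \psi \in {\mathcal O}_0\} \subset {\mathcal S}((\ref{6.0.1}), X_1^+)$ is immediate from Theorem \ref{Theorem6.3.1}. For the reverse inclusion, my plan is to take any $u \in {\mathcal S}((\ref{6.0.1}), X_1^+)$, produce a $\psi \in {\mathcal O}_0$ with $\lim_{t\to +0} t^{-\lambda(0,x)} u(t,x) = \psi(x)$ uniformly on a neighborhood of $x=0$, and then invoke the uniqueness clause of Theorem \ref{Theorem6.3.1} to conclude $u = U(\psi)$. The construction will proceed in two stages: a bootstrap that improves the $t$-order of decay of $u$, and an explicit extraction of the limit using the characteristic integral representation.

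\emph{Bootstrap stage.} Suppose $u \in X_1^d([0,T] \times D_R)$ for some $d > 0$. Set $g(t,x) := R(t,x, u, \partial u/\partial x)$; because $R$ begins at order $\geq 2$ in $(z_1, z_2)$, and Cauchy's estimate on a slightly smaller disk in $x$ turns the pointwise bound on $u$ into one on $\partial u/\partial x$, we obtain $|g(t,x)| \leq G t^{2d}$ on $[0,T] \times D_{R'}$ for some $R' < R$. From $Lu = g$ and the ODE integration along the characteristic $s \mapsto (s, \phi(s;t_0, x_0))$ (cf.\ Proposition \ref{Proposition4.2} and Corollary \ref{Corollary4.3}), setting $a_0 := {\rm Re}\lambda(0,0)$, one arrives at
\[
   |u(s, \phi(s; t_0, x_0))| \,\leq\, M s^{a_0} + M s^{a_0} \int_s^{t_0} \tau^{2d - a_0 - 1} \, d\tau .
\]
When $2d < a_0$, the right-hand side is $O(s^{2d})$ and so $u$ is upgraded to $X_1^{2d}$; iterating doubles the exponent each round. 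After finitely many rounds (and a finite total shrinkage of $R$), we reach $u \in X_1^{d'}$ with $2d' > a_1$, where $a_1$ is an upper bound on ${\rm Re}\lambda(t,x)$ near the origin, chosen so that $a_1 < 2a_0$.

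\emph{Extraction of $\psi$.} Once $2d' > a_1$, the integrand $\tau^{-\lambda(0, x_0') - 1} g(\tau, \phi(\tau; t_0, x_0))$ is absolutely integrable on $(0, t_0]$, so rearranging the characteristic formula and multiplying by $s^{-\lambda(0, \phi(s; t_0, x_0))}$ will show that $s^{-\lambda(0, \phi(s; t_0, x_0))} u(s, \phi(s; t_0, x_0))$ converges as $s \to +0$ to an explicit function $\Psi(x_0')$ (with $x_0' := \phi(0; t_0, x_0)$). Translating to the fixed $x$-variable then gives $t^{-\lambda(0,x)} u(t,x) \to \psi(x)$ uniformly on a small disk $D_\delta$, and uniform convergence of holomorphic functions makes $\psi$ holomorphic. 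Condition c3) will be decisive here: the bound $|b(t,x)| \leq B\mu(t)/|\log t|$ forces $|\phi(s;t,x) - x| = O(\varphi(s)/|\log s|)$, which is exactly what makes the factor $s^{\lambda(0, \phi(s;t,x)) - \lambda(0, x)}$ tend to $1$ as $s \to +0$.

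Having produced $\psi$, both $u$ and $U(\psi)$ lie in $X_1^+$ and satisfy the asymptotic (\ref{6.3.1}) with this same $\psi$, so the uniqueness clause of Theorem \ref{Theorem6.3.1} forces $u = U(\psi)$, which is (\ref{6.4.1}). The main technical obstacle will be the bootstrap: verifying at each round that the exponent strictly increases so the loop terminates, and that the successive domain shrinkages from Cauchy's estimate sum to a finite amount; the second delicate point will be the use of the $|\log t|$-factor in c3) to guarantee $s^{\lambda(0,\phi(s;t,x)) - \lambda(0,x)} \to 1$ in the final limit.
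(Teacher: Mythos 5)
Your proposal is correct and follows essentially the same route as the paper's proof: a finite bootstrap raising the decay exponent of $u$ until the Duhamel part beats ${\rm Re}\,\lambda(0,0)$ (with the homogeneous part capping each round at $t^{a_0}$, which is why the loop saturates near $a_0$ and why your choice $a_1<2a_0$ matters), followed by extraction of the asymptotic datum $\psi$ along characteristics and an appeal to the uniqueness clause of Theorem \ref{Theorem6.3.1}. The only difference is organizational: you integrate $Lu=g$ directly and estimate the homogeneous and inhomogeneous terms of the characteristic formula together, whereas the paper splits $u=v+w$ with $Lv=0$, $Lw=f$ and quotes Lemma \ref{Lemma6.2.1}(2), Corollary \ref{Corollary4.3} and Theorem \ref{Theorem3.1} for the two pieces; the substance, including the role of the $|\log t|$ factor in c3) (which the paper hides inside the substitution $w=t^{-\lambda(0,x)}v$ of Lemma \ref{Lemma6.2.1}), is identical.
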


\begin{proof}
   Take any $u(t,x) \in {\mathcal S}((\ref{6.0.1}),X_1^+)$.
We have $u(t,x) \in X_1^{\mu}([0,T_1] \times D_{R_1})$
for some $\mu>0$, $T_1>0$ and $R_1>0$. Without loss of 
generality, we may assume that $0<\mu< {\rm Re}\lambda(0,0)$
holds.
\par
   1) If ${\rm Re}\lambda(0,0)<2\mu$, we set $d_1=\mu$ and $N=1$.
\par
   2) If $2\mu \leq {\rm Re}\lambda(0,0)$, we take $d_i>0$
($i=1,\ldots,N$) so that
\[
    0<d_1< \mu<d_2< \cdots<d_N< {\rm Re}\lambda(0,0)
             <2d_N
\]
and that $d_{i+1} \leq 2d_i$ ($i=1,\ldots,N-1$) hold.

    Step 1:  In the case 2) we have $u(t,x) \in X_1^{d_2}$. 
The proof is as follows.
\par
    By taking $a_0>0$, $T_1>0$ and $R_1>0$ suitably, we have
the condition
\[
    \mu<d_2< a_0 \leq {\rm Re}\lambda(t,x)
     \quad \mbox{on $[0,T_1] \times D_{R_1}$}.
\]
We set
\begin{equation}\label{6.4.2}
     f(t,x)= 
    R \Bigl( t,x,u, \frac{\partial u}{\partial x} \Bigr):
\end{equation}
we have $f(t,x) \in X_0([0,T_1] \times D_{R_1})$.
Since $u \in X_1^{\mu}([0,T_1] \times D_{R_1})$ and $d_2<2\mu$ hold,
we have the estimate $|f(t,x)| \leq Ft^{d_2}$ on 
$[0,T_1] \times D_{R_1}$ for some $F>0$. By applying 
Proposition \ref{Proposition4.2} (and Corollary \ref{Corollary4.3}) 
to $Lw=f(t,x)$ we have a solution $w(t,x) \in X_1^{d_2}$ of $Lw=f(t,x)$. 
Since $u(t,x)$ is a solution of $Lu=f(t,x)$, by setting 
$v(t,x)=u(t,x)-w(t,x)$ we have an equation
\[
          Lv=0
\]
and $v(t,x) \in X_1^{\mu}$. Then, by (2) of Lemma \ref{Lemma6.2.1} we 
have an expression $v=V(\psi)$ for some $\psi(x) \in {\mathcal O}_0$,
that is, we have the condition:
\[
      t^{-\lambda(0,x)}v(t,x) \longrightarrow \psi(x)
     \quad \mbox{(as $t \longrightarrow +0$)}
\]
uniformly on $D_{R}$ for some $R>0$. This means that $v \in X_1^{(a_0)}$.
Hence, we have
\[
    u(t,x)=v(t,x)+w(t,x) \in X_1^{(a_0)}+X_1^{d_2}
         \subset X_1^{d_2}.
\]

    Step 2:  In the case 1) we know $u(t,x) \in X_1^{d_N}$ from 
the first. In the case 2), by Step 1 we have $u(t,x) \in X_1^{d_2}$:
then, by applying the same argument we have $u(t,x) \in X_1^{d_3}$,
and so by repeating the same argument we have finally the condition 
$u(t,x) \in X_1^{d_N}$. Thus, in any case we have 
$u(t,x) \in X_1^{d_N}$.
\par
   Then, the function
$f(t,x)$ in (\ref{6.4.2}) satisfies $|f(t,x)| \leq F_1t^{2d_N}$ on
$[0,T_2] \times D_{R_2}$ for some $T_2>0$ and $R_2>0$. Since 
$2d_N > {\rm Re}\lambda(0,0)$ holds, by applying 
Theorem \ref{Theorem3.1} to $Lw=f$ we have a solution 
$w(t,x) \in X_1^{(2d_N)}$ of $Lw=f$. Since $u(t,x)$ is a solution of 
$Lu=f(t,x)$, by setting $v(t,x)=u(t,x)-w(t,x)$ we have an equation
\[
          Lv=0
\]
and $v(t,x) \in X_1^{d_N}$. Hence, by (2) of Lemma \ref{Lemma6.2.1} we 
have an expression $v=V(\psi)$ for some $\psi(x) \in {\mathcal O}_0$,
that is, we have the condition:
\[
      t^{-\lambda(0,x)}v(t,x) \longrightarrow \psi(x)
     \quad \mbox{(as $t \longrightarrow +0$)}
\]
uniformly on $D_{R}$ for some $R>0$. Since $w(t,x) \in X_1^{(2d_N)}$ 
and $2d_N>{\rm Re}\lambda(0,0)$ hold, we have 
$t^{-\lambda(0,x)}w(t,x) \longrightarrow 0$
(as $t \longrightarrow +0$) uniformly near $x=0$. 
Since $u(t,x)=v(t,x)+w(t,x)$, we obtain
\[
      t^{-\lambda(0,x)}u(t,x) \longrightarrow \psi(x)
     \quad \mbox{(as $t \longrightarrow +0$)}
\]
uniformly near $x=0$.
Thus, by the uniqueness of the solution in Theorem \ref{Theorem6.3.1} 
we have $u(t,x)=U(\psi)(t,x)$ on $[0,\epsilon] \times D_{\delta}$
for some $\epsilon>0$ and $\delta>0$.
\par
   This proves (\ref{6.4.1}).
\end{proof}

%
%

\subsection{On ${\mathcal S}((\ref{6.0.1}),{\mathscr X}_1)$}
        \label{subsection6.5}
%

   Lastly, we consider (\ref{6.0.1}) under the 
conditions (\ref{6.1.3}) and ${\rm Re} \lambda(0,0)>0$. The purpose 
is to prove

\begin{tthm}\label{Theorem6.5.1}
    Under the conditions {\rm (\ref{6.1.3})} and 
${\rm Re} \lambda(0,0)>0$, we have
\begin{equation}\label{6.5.1}
     {\mathcal S}((\ref{6.0.1}),{\mathscr X}_1)
                  = {\mathcal S}((\ref{6.0.1}),X_1^+).
\end{equation}
\end{tthm}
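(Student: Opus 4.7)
The inclusion ${\mathcal S}((\ref{6.0.1}), X_1^+) \subseteq {\mathcal S}((\ref{6.0.1}), {\mathscr X}_1)$ is automatic from $X_1^+ \subset {\mathscr X}_1$, so the content is the reverse inclusion. The plan is to show every $u \in {\mathcal S}((\ref{6.0.1}), {\mathscr X}_1)$ actually lies in $X_1^+$, after which Theorem \ref{Theorem6.4.1} supplies the classification $u = U(\psi)$. The argument will proceed in three stages: (a) extract quantitative smallness from the ${\mathscr X}_1$ definition together with Cauchy's estimate; (b) combine this with the equation at $t = 0$ to force $u(0, x) \equiv 0$ in a neighborhood of $x = 0$; (c) bootstrap to a polynomial decay rate $|u(t, x)| \leq C t^{a_0}$ via a Gronwall--Bihari inequality along characteristics.

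Fix $u \in {\mathcal S}((\ref{6.0.1}), {\mathscr X}_1)$. Unwinding the ${\mathscr X}_1$ condition, for each $\delta > 0$ one finds $\rho, \sigma > 0$ with $|u(t, x)| \leq \delta \rho^2$ on $(0, \sigma) \times D_\rho$, and Cauchy's estimate then gives $|\partial_x u(t, x)| \leq 2\delta \rho$ on $(0, \sigma) \times D_{\rho/2}$. Since $u \in X_1$ is continuous up to $t = 0$, these bounds pass to the Taylor coefficients of the holomorphic function $u(0, x) = \sum_{k \geq 0} c_k x^k$, forcing $c_0 = c_1 = c_2 = 0$, so $u(0, x) = O(|x|^3)$. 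The estimate $|b(t, x)| \leq B\mu(t)$ with $\mu(0^+) = 0$ gives $b(0, x) \equiv 0$; passing $(\ref{6.0.1})$ to $t = 0$ then yields
\begin{equation*}
\lambda(0, x)\, u(0, x) = -R\bigl(0, x, u(0, x), \partial_x u(0, x)\bigr).
\end{equation*}
Because $R$ is of order $\geq 2$ in $(z_1, z_2)$ and $\lambda(0, 0) \neq 0$, iterating this identity strictly improves the vanishing order of $u(0,x)$ at each pass, forcing $u(0, x) \equiv 0$ in a neighborhood of $x = 0$. In particular $M(t) := \sup_{|x| \leq \rho_0} |u(t, x)| \to 0$ as $t \to 0$ for some $\rho_0 > 0$.

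For stage (c), choose $a_0$ with $0 < a_0 \leq {\rm Re}\,\lambda(t, x)$ on a small neighborhood of the origin. Along any characteristic $\phi(s; t_0, x_0)$ that remains in $D_{\rho_0/2}$, Proposition \ref{Proposition4.2} gives the representation
\begin{equation*}
|u^*(t)| \leq (t/t_1)^{a_0}\, |u^*(t_1)| + \int_t^{t_1} (t/\tau)^{a_0}\, \frac{|R^*(\tau)|}{\tau}\, d\tau,
\end{equation*}
where $u^*, R^*$ are the restrictions along the characteristic. Cauchy's estimate on a slightly larger disk gives $|R^*(s)| \leq C\, M(s)^2$ (using order $\geq 2$). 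Taking the supremum over characteristics starting in $D_{\rho_0/4}$ (which remain in $D_{\rho_0/2}$ by the argument of Lemma \ref{Lemma6.1.1}) and setting $\Phi(t) := M(t)/t^{a_0}$, one obtains the nonlinear Volterra inequality
\begin{equation*}
\Phi(t) \leq \Phi(t_1) + C' \int_t^{t_1} s^{a_0 - 1}\, \Phi(s)^2\, ds, \qquad 0 < t \leq t_1.
\end{equation*}
Since $M(t_1) \to 0$ as $t_1 \to 0$, one can choose $t_1$ so small that $4 C'\, M(t_1)/a_0 < 1$; a continuous-induction (Bihari) argument then yields $\Phi(t) \leq 2\,\Phi(t_1)$ on $(0, t_1]$, that is $M(t) \leq 2\, M(t_1)(t/t_1)^{a_0}$. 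Thus $u \in X_1^{a_0} \subset X_1^+$, and Theorem \ref{Theorem6.4.1} supplies the remainder of the classification.

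The main obstacle is stage (c): bridging from the qualitative vanishing $u(0, \cdot) \equiv 0$ to a quantitative polynomial rate. It depends essentially on both ${\rm Re}\,\lambda(0, 0) > 0$ (which provides the $(t/t_1)^{a_0}$ decay factor and the integrable weight $s^{a_0 - 1}$ in the Volterra inequality) and the order-$\geq 2$ structure of $R$ (which produces the quadratic feedback $\Phi^2$ that the Bihari step can absorb). Example \ref{Example2.6} indicates that the ${\mathscr X}_1$ condition is essentially sharp for this bootstrap to close.
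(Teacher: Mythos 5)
Your overall strategy is the right one --- the only content is the inclusion ${\mathcal S}((\ref{6.0.1}),{\mathscr X}_1)\subset{\mathcal S}((\ref{6.0.1}),X_1^+)$, and once $u\in X_1^+$ is established Theorem \ref{Theorem6.4.1} finishes the job --- but stage (c), which you yourself flag as the main obstacle, contains a genuine gap: the Volterra inequality $\Phi(t)\leq\Phi(t_1)+C'\int_t^{t_1}s^{a_0-1}\Phi(s)^2\,ds$ is never actually established. The characteristic representation controls $\sup|u(t,\cdot)|$ only over the time-$t$ slice of the flow-out of $D_{\rho_0/4}$, whereas the bound $|R^*(s)|\leq CM(s)^2$ requires estimating $\partial_x u(s,\cdot)$ by Cauchy's inequality on a disk of \emph{fixed} radius around each characteristic point, i.e.\ it requires $M(s)$ to be the supremum over a \emph{strictly larger} disk. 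The quantity on the left of your inequality is therefore a supremum over a smaller region than the quantity appearing quadratically on the right, and no single function $\Phi$ satisfies the closed inequality you feed into the Bihari step. Iterating with a second pair of disks reproduces the same mismatch one level up; this is exactly the loss-of-derivatives/loss-of-domain obstruction that forces the Nagumo--Nirenberg scale-of-Banach-spaces machinery in the $t$-continuous Cauchy--Kowalevsky setting, and a two-radius bootstrap cannot absorb it. (Stages (a)--(b) are plausible under the literal reading of Definition \ref{Definition2.2}, but note they lean on continuity of $u$ and $t\partial u/\partial t$ up to $t=0$, which the paper's argument deliberately never uses; in any case the smallness of $M(t_1)$ that you extract from them is already immediate from (\ref{2.2}).)

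The paper closes the estimate by a different device that avoids Cauchy's inequality entirely: write $R(t,x,z_1,z_2)=R_1(t,x,z_1,z_2)z_1+R_2(t,x,z_1,z_2)z_2$ with $R_i(t,x,0,0)\equiv 0$, and set $\lambda_1(t,x)=R_1(t,x,u,q)$, $b_1(t,x)=R_2(t,x,u,q)$ with $q=\partial u/\partial x$. Then $u$ satisfies the \emph{linear} equation (\ref{6.5.3}) in which the nonlinearity has been absorbed into the coefficients, and $q$ satisfies its own linear equation (\ref{6.5.4}) obtained by differentiation, so the pair $(u,q)$ is closed along the characteristics of the modified field $-(b+b_1)/t$ with no appeal to derivative bounds. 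The ${\mathscr X}_1$ condition is used precisely where it is needed: $r_1=o(R^2)$ and $r_2=o(R)$ make $|\lambda_1|\leq a/2$ (so ${\rm Re}(\lambda+\lambda_1)\geq a$ and $|u^*(t)|\leq r_1(t/T)^a$ follows by direct integration, Lemma \ref{Lemma6.5.4}), and make the drift of the modified characteristics smaller than $R/4$, which is what allows Lemma \ref{Lemma6.5.6} to show the flow-out covers a full product neighborhood $(0,T]\times D_{R/4}$ --- a covering step your sketch also needs but passes over. I would rework stage (c) along these lines, or else replace the Bihari step by an honest scale-of-Banach-spaces argument.
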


\begin{proof}
    Since
${\mathcal S}((\ref{6.0.1}),{\mathscr X}_1)
                  \supset {\mathcal S}((\ref{6.0.1}),X_1^+)$
is trivial, it is enough to prove that 
${\mathcal S}((\ref{6.0.1}),{\mathscr X}_1)
                  \subset {\mathcal S}((\ref{6.0.1}),X_1^+)$
holds. Let us show this from now.
\par
   The argument below is quite similar to the one used in the proof
of [Theorem 2.2 in \cite{unique}], where we have treated the case
${\rm Re}\lambda(0,0)<0$. Since we are now considering the case
${\rm Re}\lambda(0,0)>0$, the treating of some parts must be changed.

    Step 1:  Take any $u(t,x) \in 
{\mathcal S}((\ref{6.0.1}),{\mathscr X}_1)$; then we have
$u(t,x) \in X_1((0,T_1] \times D_{R_1})$ and 
\begin{equation}\label{6.5.2}
     \varlimsup_{R \to +0}  \biggl[ \, 
          \lim_{T \to +0}  \Bigl( \frac{1}{R^2}
       \sup_{(0,T] \times D_R}|u(t,x)| \Bigr)  \biggr]
        =  0.  
\end{equation}
Since $u(t,x)$ is a solution of (\ref{6.0.1}), under setting
$q(t,x)=(\partial u/\partial x)(t,x)$ we have the relation
\[
   t \,\frac{\partial u}{\partial t}
   = \lambda(t,x)u+ b(t,x)\frac{\partial u}{\partial x} 
     + R_1(t,x,u,q)u+R_2(t,x,u,q)\frac{\partial u}{\partial x}
\]
on $(0,T_1] \times D_{R_1}$ for some $R_i(t,x,z_1,z_2) \in C^0(\Omega)$
($i=1,2$) that are holomorphic in the variable 
$(x,z_1,z_2) \in D_{R_0} \times D_{\rho_0} \times D_{\rho_0}$ 
for any fixed $t$ and that satisfy $R_i(t,x,0,0) \equiv 0$ ($i=1,2$).
\par
   By taking $T_1>0$ and $R_1>0$ sufficiently small, we may 
assume that
\begin{align*}
   &{\rm Re} \lambda(t,x)>2a>0, 
        \quad \mbox{on $[0,T_1] \times D_{R_1}$}, \\
   &|R_1(t,x,u,q)| \leq A_1|u|+A_2|q|,
               \quad \mbox{on $[0,T_1] \times D_{R_1}$}, \\
   &|R_2(t,x,u,q)| \leq B_1|u|+B_2|q|,
               \quad \mbox{on $[0,T_1] \times D_{R_1}$}
\end{align*}
for some $a>0$, $A_1>0$, $A_2>0$, $B_1>0$ and $B_2>0$.
By setting 
\begin{align*}
   &\lambda_1(t,x)=R_1(t,x,u(t,x),q(t.x)) 
                          \in X_0((0,T_1] \times D_{R_1})), \\
   &b_1(t,x)=R_2(t,x,u(t,x),q(t.x)) 
                          \in X_0((0,T_1] \times D_{R_1}))
\end{align*}
we have the relation
\begin{equation}\label{6.5.3}
   t \,\frac{\partial u}{\partial t}-(b(t,x)+b_1(t,x))
         \frac{\partial u}{\partial x}
   = (\lambda(t,x)+\lambda_1(t,x))u 
\end{equation}
on $(0,T_1] \times D_{R_1}$. By applying $\partial/\partial x$
to (\ref{6.5.3}) we have
\begin{align}
   &t \,\frac{\partial q}{\partial t}-(b(t,x)+b_1(t,x))
         \frac{\partial q}{\partial x} \label{6.5.4} \\
   &\qquad \qquad = (\lambda(t,x)+\lambda_1(t,x)+c(t,x))q 
        + \gamma(t,x)u \notag
\end{align}
on $(0,T_1] \times D_{R_1}$, where
\begin{align*}
   &c(t,x)= \frac{\partial b}{\partial x}(t,x)+ 
                  \frac{\partial b_1}{\partial x}(t,x) 
             \in X_0((0,T_1] \times D_{R_1}), \\
   &\gamma(t,x)= \frac{\partial \lambda}{\partial x}(t,x)+ 
                  \frac{\partial \lambda_1}{\partial x}(t,x) 
             \in X_0((0,T_1] \times D_{R_1}).
\end{align*}

    Step 2:  For $0<T<T_1$ and $0<R<R_1$ we set
\begin{align*}
   &r_1 = \sup_{(0,T] \times D_R}|u(t,x)|, \quad
   r_2 = \sup_{(0,T] \times D_R}|q(t,x)|, \\
   &C= \sup_{(0,T] \times D_R}|c(t,x)|, \quad
    L= \sup_{(0,T] \times D_R}|\gamma(t,x)|.
\end{align*}

\begin{llem}\label{Lemma6.5.2}
    We can take $0<T<T_1$ and $0<R<R_1$ 
so that the following conditions are satisfied:
\begin{align}
   &A_1r_1+A_2r_2 \leq \frac{a}{2}, \label{6.5.5} \\
   &B\varphi(T)+ \frac{B_1r_1+B_2r_2}{a}
            + \frac{B_2Lr_1}{a^2}< \frac{R}{4}, \label{6.5.6} \\
   &C \leq \frac{a}{2}. \label{6.5.7}
\end{align}
\end{llem}

\begin{proof}
    By (\ref{6.5.2}) we have
\begin{align*}
   &\lim_{T \to 0}r_1 = o(R^2) 
             \quad \mbox{(as $R \longrightarrow +0$)}, \\
   &\lim_{T \to 0}r_2 = o(R) 
             \quad \mbox{(as $R \longrightarrow +0$)}
\end{align*}
and so by taking $T>0$ and $R>0$ sufficiently small we have
the conditions (\ref{6.5.5}) and (\ref{6.5.6}). Since 
$|b_1(t,x)| \leq B_1r_1+B_2r_2$ on $(0,T] \times D_R$ we have
\begin{align*}
   &\lim_{T \to 0}\sup_{(0,T] \times D_R}|b_1(t,x)| 
          = o(R) \quad \mbox{(as $R \longrightarrow +0$)}, \\
   &\lim_{T \to 0}\sup_{(0,T] \times D_R}|(\partial b_1/\partial x)(t,x)| 
          = o(1) \quad \mbox{(as $R \longrightarrow +0$)}.
\end{align*}
Therefore, by taking $T>0$ and $R>0$ sufficiently small
we have the condition $|(\partial b_1/\partial x)(t,x)| \leq a/4$ 
on $(0,T] \times D_R$.
Since $|b(t,x)| \leq B\mu(t)$ holds on $[0,T_0] \times D_{R_0})$,
by taking $T>0$ sufficiently small we have 
$|(\partial b/\partial x)(t,x)| \leq a/4$ on $(0,T] \times D_R$.
Since $c(t,x)= (\partial b_1/\partial x)(t,x)
          + (\partial b/\partial x)(t,x)$ we have the condition 
(\ref{6.5.7}). 
\end{proof}

\begin{ccor}\label{Corollary6.5.3}
   Under the situation in Lemma {\rm \ref{Lemma6.5.2}} we have
\begin{align*}
    &{\rm Re}(\lambda(t,x)+\lambda_1(t,x)) \geq a
           \quad \mbox{on $(0,T] \times D_R$}, \\
    &{\rm Re}(\lambda(t,x)+\lambda_1(t,x)+c(t,x)) \geq a
           \quad \mbox{on $(0,T] \times D_R$}.
\end{align*}
\end{ccor}

\begin{proof}
    Since $|\lambda_1(t,x)| \leq A_1r_1+A_2r_2$ 
and $|c(t,x)| \leq C$ hold on $(0,T] \times D_R$, by (\ref{6.5.5})
and (\ref{6.5.7}) we have the result.
\end{proof}

    Step 3:  From now, we take $T>0$ and $R>0$ as in 
Lemma \ref{Lemma6.5.2}, and fix them. For $\xi \in D_R$, let us 
consider the initial value problem
\begin{equation}\label{6.5.8}
    \frac{dx}{dt}= - \frac{b(t,x)+b_1(t,x)}{t},
       \quad x \bigr|_{t=T}=\xi. 
\end{equation}
Let $(t_{\xi},T]$ be the maximal existence domain of the solution
$x(t)$ of (\ref{6.5.8}). Set
\begin{align*}
    &u^*(t)= u(t,x(t)), \quad q^*(t)= q(t,x(t)), \\
    &\lambda^*(t)= \lambda(t,x(t)), 
               \quad \lambda_1^*(t)= \lambda_1(t,x(t)), \\
    &c^*(t)= c(t,x(t)), \quad
    \gamma^*(t)= \gamma(t,x(t)):
\end{align*}
these functions are well-defined on $(t_{\xi},T]$. By 
(\ref{6.5.3}) and (\ref{6.5.4}) we have
\begin{align}
   &t \frac{du^*}{dt}= (\lambda^*(t)+\lambda_1^*(t))u^*
        \quad \mbox{on $(t_{\xi},T]$}, \label{6.5.9}\\
   &t \frac{dq^*}{dt}= (\lambda^*(t)+\lambda_1^*(t)+c^*(t))q^*
                + \gamma^*(t)u^*
        \quad \mbox{on $(t_{\xi},T]$}. \label{6.5.10}
\end{align}

\begin{llem}\label{Lemma6.5.4}
   In the above situation, for any $t \in (t_{\xi},T]$ we have
\begin{align}
    &|u^*(t)| \leq r_1 \Bigl(\frac{t}{T} \Bigr)^a, 
              \label{6.5.11} \\
    &|q^*(t)| \leq r_2  \Bigl(\frac{t}{T} \Bigr)^a
             + Lr_1 \Bigl(\frac{t}{T} \Bigr)^a 
                \log\Bigl(\frac{T}{t} \Bigr), \label{6.5.12} \\
    &|x(t)| \leq |\xi|+ B\varphi(T)+ \frac{B_1r_1+B_2r_2}{a}
            + \frac{B_2Lr_1}{a^2}. \label{6.5.13}
\end{align}
\end{llem}

\begin{proof}
     We set
\[
     E(t)= \exp \Bigl[ \int_t^T 
              \frac{\lambda^*(s)+\lambda_1^*(s)}{s} ds \Bigr],
       \quad t_{\xi}<t \leq T.
\]
Then, by (\ref{6.5.9}) we have
\[
     \frac{d}{dt} ( u^*(t)E(t))=0,
            \quad t_{\xi}<t \leq T,
\]
and so by integrating this from $t$ to $T$ we have 
$u^*(T)-u^*(t)E(t)=0$, that is, 
\[
     u^*(t)= u^*(T) \exp \Bigl[ -\int_t^T 
              \frac{\lambda^*(s)+\lambda_1^*(s)}{s} ds \Bigr]
  \quad \mbox{on $(t_{\xi},T]$}.
\]
Since $|u^*(T)| \leq r_1$ and 
${\rm Re}(\lambda^*(t)+\lambda_1^*(t)) \geq a$ on $(t_{\xi},T]$,
we have
\[
    |u^*(t)| \leq r_1 \exp \Bigl[ -\int_t^T 
              \frac{a}{s} ds \Bigr]
         = r_1 \Bigl(\frac{t}{T} \Bigr)^a 
                   \quad \mbox{on $(t_{\xi},T]$}.
\]
This proves (\ref{6.5.11}). Similarly, if we set
\[
     E_1(t)= \exp \Bigl[ \int_t^T 
       \frac{\lambda^*(s)+\lambda_1^*(s)+c^*(s)}{s} ds \Bigr],
       \quad t_{\xi}<t \leq T,
\]
by the same argument we have 
\begin{align*}
    q^*(t)= &q^*(T) \exp \Bigl[ -\int_t^T 
       \frac{\lambda^*(s)+\lambda_1^*(s)+c^*(s)}{s} ds \Bigr]\\
     &- \int_t^T \exp \Bigl[ -\int_t^{\tau}
        \frac{\lambda^*(s)+\lambda_1^*(s)+c^*(s)}{s} ds \Bigr]
             \frac{\gamma^*(\tau)u^*(\tau)}{\tau} d\tau.
\end{align*}
Since $|q^*(T)| \leq r_2$, 
${\rm Re}(\lambda^*(t)+\lambda_1^*(t)+c^*(t)) \geq a$ and
$|\gamma^*(t)| \leq L$ on $(t_{\xi},T]$, 
by (\ref{6.5.11}) we have
\begin{align*}
   |q^*(t)| &\leq r_2 \Bigl(\frac{t}{T} \Bigr)^a
          + \int_t^T \Bigl(\frac{t}{\tau} \Bigr)^a 
                      \frac{Lr_1(\tau/T)^a}{\tau} d\tau \\
     &= r_2 \Bigl(\frac{t}{T} \Bigr)^a 
              + Lr_1 \Bigl(\frac{t}{T} \Bigr)^a 
              \log\Bigl(\frac{T}{t} \Bigr) 
                            \quad \mbox{on $(t_{\xi},T]$}.
\end{align*}
This proves (\ref{6.5.12}). 
\par
   Let us show (\ref{6.5.13}). Since $x(t)$ is a solution of 
(\ref{6.5.8}), we have
\[
    x(t)=\xi + \int_t^T 
     \frac{b(\tau,x(\tau))+b_1(\tau,x(\tau))}{\tau} d\tau.
\]
Since $|b_1(t,x)| \leq B_1|u(t,x)|+B_2|q(t,x)|$ is known on 
$(0,T_1] \times D_{R_1}$, by (\ref{6.1.3}), (\ref{6.5.11}) and 
(\ref{6.5.12}) we have
\begin{equation}\label{6.5.14}
    |b(t,x(t))|+|b_1(t,x(t))| \leq \Phi(t) \quad
        \mbox{on $(t_{\xi},T]$} 
\end{equation}
with
\begin{equation}\label{6.5.15}
    \Phi(t) = B\mu(t)+ B_1r_1 \Bigl(\frac{t}{T} \Bigr)^a 
        +B_2r_2 \Bigl(\frac{t}{T} \Bigr)^a
        + B_2Lr_1 \Bigl(\frac{t}{T} \Bigr)^a 
                           \log\Bigl(\frac{T}{t} \Bigr).
\end{equation}
Hence, we obtain
\begin{align*}
   |x(t)| &\leq |\xi|+ \int_t^T \frac{\Phi(\tau)}{\tau} d\tau \\
      &\leq |\xi|+ B\varphi(T)+ \frac{B_1r_1+B_2r_2}{a}
            + \frac{B_2Lr_1}{a^2} \quad \mbox{on $(t_{\xi},T]$}.
\end{align*}
This proves (\ref{6.5.13}).
\end{proof}

\begin{ccor}\label{Corollary6.5.5}
     If $|\xi|<3R/4$, we have $t_{\xi}=0$.
\end{ccor}

\begin{proof}
    If $|\xi|<3R/4$, by (6.5.6) and (6.5.13) we have
\[
    |x(t)| \leq \frac{3R}{4}+B\varphi(T)+ \frac{B_1r_1+B_2r_2}{a}
            + \frac{B_2Lr_1}{a^2}<R
\]
on $(t_{\xi},T]$. If $t_{\xi}>0$, this estimate asserts that the
solution $x(t)$ can be extended to $(t_{\xi}-\delta,T]$ for some
$\delta>0$. But, this contradicts the condition that $(t_{\xi},T]$
is a maximal existence domain. Hence, we have $t_{\xi}=0$.
\end{proof}

    Step 4:  We denote by $\phi_1(t;T,\xi)$ the unique solution of
(\ref{6.5.8}): we have
\[
    \phi_1(t;T,\xi)
    = \xi + \int_t^T \frac{b(\tau,\phi_1(\tau;T,\xi))
          +b_1(\tau,\phi_1(\tau;T,\xi))}{\tau}d\tau.
\]
Set
\[
   {\mathcal D}= \bigcup_{\xi \in D_{3R/4}}
     \{(t,\phi_1(t;T,\xi)) \,;\, 0<t \leq T \}.
\]
Then, by Lemma \ref{Lemma6.5.4} we have
\begin{align*}
    &|u(t,x)| \leq r_1 \Bigl(\frac{t}{T} \Bigr)^a
            \quad \mbox{on ${\mathcal D}$}, \\
    &|q(t,x)| \leq r_2 \Bigl(\frac{t}{T} \Bigr)^a 
             + Lr_1 \Bigl(\frac{t}{T} \Bigr)^a 
                \log\Bigl(\frac{T}{t} \Bigr)
            \quad \mbox{on ${\mathcal D}$}.
\end{align*}
Therefore, to get the condition $u(t,x) \in X_1^+$ it is
enough to prove the following result.

\begin{llem}\label{Lemma6.5.6}
     We have the condition
\[
      (0,T] \times D_{R/4} \subset {\mathcal D}. 
\]
\end{llem}

\begin{proof}
    To show this, it is sufficient to porve
the following assertion: for any $(t_0,x_0) \in (0,T) \times D_{R/4}$ 
there is a unique $\xi \in D_{R/2}$ such that $\phi_1(t_0;T,\xi)=x_0$ 
holds, that is, 
\begin{equation}\label{6.5.16}
    x_0= \xi + \int_{t_0}^T 
        \frac{b(\tau,\phi_1(\tau;T,\xi))+b_1(\tau,\phi_1(\tau;T,\xi))}
             {\tau}d\tau
\end{equation}
holds.  For simplicity, we set
\[
     G(t,\xi)= b(t,\phi_1(t;T,\xi))+b_1(t,\phi_1(t;T,\xi))
     \in X_0((0,T] \times D_{3R/4}).
\]
By (\ref{6.5.14}) and the Cauchy's inequality, we have
\begin{align*}
   &|G(t,\xi)| \leq \Phi(t) \quad 
               \mbox{on $(0,T] \times D_{3R/4}$}, \\
   &\Bigl| \frac{\partial G}{\partial \xi}(t,\xi) \Bigr| 
           \leq \frac{\Phi(t)}{R/4} \quad 
               \mbox{on $(0,T] \times D_{R/2}$}.
\end{align*}
Therefore, we have the Lipschitz condition:
\[
     |G(t,\xi_1)-G(t,\xi_2)| \leq \frac{\Phi(t)}{R/4}|\xi_1-\xi_2|
\]
for any $(t,\xi_i) \in (0,T] \times D_{R/2}$ ($i=1,2$). We set
also
\[
    K=B\varphi(T)+ \frac{B_1r_1+B_2r_2}{a}
            + \frac{B_2Lr_1}{a^2}:
\]
by (\ref{6.5.6}) we have $K<R/4$ and under the notation 
(\ref{6.5.15}) we have
\[
    \int_0^T \frac{\Phi(\tau)}{\tau} d\tau =K.
\]
\par
   Let us solve (\ref{6.5.16}), that is, 
\begin{equation}\label{6.5.17}
    \xi= x_0- \int_{t_0}^T 
        \frac{G(\tau,\xi)}{\tau}d\tau.
\end{equation}
To solve this equation, we use the method of successive 
approximations: the approximate solutions $\xi_n$ ($n=0,1,2,\ldots$)
are defined by $\xi_0=x_0$ and 
\[
    \xi_n= x_0- \int_{t_0}^T 
        \frac{G(\tau,\xi_{n-1})}{\tau}d\tau, \quad n \geq 1.
\]
\par
   Since $\xi_0=x_0 \in D_{R/4}$, we see that $G(\tau,\xi_0)$ is 
well-defined. Then, $\xi_1$ is well-defined and 
\[
    |\xi_1| \leq |x_0|+ \int_{t_0}^T \frac{\Phi(\tau)}{\tau} d\tau
      <R/4+ K <R/4+R/4=R/2.
\]
In addition, we have 
\[
   |\xi_1-\xi_0| \leq 
      \Bigl| \int_{t_0}^T \frac{G(\tau,\xi_0)}{\tau}d\tau \Bigr|
      \leq \int_{t_0}^T \frac{\Phi(\tau)}{\tau}d\tau  = K.
\]
In this way, by a standard argument we can see that 
$\xi_n \in D_{R/4+ K} \subset D_{R/2}$
($n=0,1,2,\ldots$) are well-defined and we have
\[
    |\xi_n-\xi_{n-1}| \leq K \Bigl(\frac{K}{R/4} \Bigr)^{n-1},
    \quad n=1,2,\ldots.
\]
Since $K/(R/4)<1$ is supposed, we see that $\xi_n$ is convergent 
(as $n \longrightarrow \infty$) to a solution $\xi \in D_{R/2}$ 
of (\ref{6.5.17}). This shows the existence of a solution 
$\xi \in D_{R/2}$ of (\ref{6.5.16}).
\par
   The uniqueness of the solution can be proved in the same way.
\end{proof}

    This completes the proof of Theorem \ref{Theorem6.5.1}.
\end{proof}

%
%

\end{document}